\documentclass[12pt]{amsart}

% Theorem environments
\usepackage{xcolor}
\usepackage{graphicx}
\usepackage{amsmath}
\usepackage{amsfonts}
\usepackage{amssymb}
\usepackage[all]{xy}
\usepackage{array}
\usepackage{enumerate}
\usepackage{url}
\usepackage{tikz}
\usetikzlibrary{matrix,arrows}

\newtheorem{theorem}{Theorem}[section]

\newtheorem{conjecture}{Conjecture}
\newtheorem{corollary}[theorem]{Corollary}

\newtheorem{lemma}[theorem]{Lemma}

\newtheorem{problem}{Problem}
\newtheorem{proposition}[theorem]{Proposition}

\newcommand{\DD}{\mathcal{D}}

          % sequence
                    % set
                  % algebras
          % definition
     % Green's
%equivalence
      % set union
    % set intersection
               % classes
      % NS_7^{R,0}
      % NS_7^{R,1}
      % NS_7^{L,0}
      % NS_7^{L,1}
         % NC_5^R
         % NC_5^L
                        % N_3
                        % N_5
        % Quotient by Green's relation
                   % upshape text
%style

%\renewcommand{\baselinestretch}{1.5}
%\textwidth=(paperwidth-6)cm

%\setcounter{Maxaffil}{0}
%\renewcommand\Affilfont{\itshape\small}

\title[On Classes of Antilattices]{On Elementary, Odd, Semimagic and Other Classes of Antilattices}
\author{Karin Cvetko-Vah \and
Michael Kinyon \and
Toma\v{z} Pisanski
}
\address[Cvetko-Vah]{Department of Mathematics \\
Faculty of Mathematics and Physics \\
University of Ljubljana \\ Jadranska 21, SI-1000 Ljubljana, Slovenia}
\email{karin.cvetko@fmf.uni-lj.si}

\address[Kinyon]{Department of Mathematics \\
University of Denver \\ Denver, CO 80208, USA}
\email{mkinyon@du.edu}

\address[Pisanski]{Faculty of Mathematics, Natural Sciences and Information Technologies \\
University of Primorska \\ Glagolja\v{s}ka 8 \\ SI-6000 Koper, Slovenia \\
and \\
Andrej Maru\v{s}i\v{c} Institute \\
University of Primorska \\ Muzejski trg 2 \\ 6000 Koper, Slovenia\\
IMFM \\ Jadranska 19, SI-1000 Ljubljana, Slovenia}
\email{pisanski@upr.si}

\date{\today}

\keywords{antilattice, orthogonal latin squares, semimagic squares, simple antilattice, elementary antilattice, odd antilattice,
even graph}

\subjclass[2010]{06B75, 05A15, 05A17, 03G10, 11P99}

\begin{document}
\begin{abstract}
An \emph{antilattice} is an algebraic structure based on the same set of axioms as a lattice except that the two commutativity axioms for $\land$ and $\lor$ are replaced by anticommutative counterparts. In this paper we study certain classes of antilattices, including elementary (no nontrivial subantilattices), odd (no subantilattices of order $2$), simple (no nontrivial congruences) and irreducible (not expressible as a direct product). In the finite case, odd antilattices are the same as Leech's \emph{Latin} antilattices which arise from the construction of semimagic squares from pairs of orthogonal Latin squares.
\end{abstract}
\maketitle

\section{Introduction}

\emph{Antilattices} are algebraic structures introduced and studied by J. Leech in 2005 \cite{Le05} and later expanded in the book \cite{Le20}. Antilattices have two binary operations $\land$, $\lor$, and fulfil almost the same set of axioms as lattices, except that the commutativity axioms are replaced by axioms of anticommutativity, in the sense that no two distinct elements commute with respect to either operation. In \cite{LaLe02}  it was shown that antilattices play a structural role in the theory of noncommutative lattices, where ``noncommutative'' is to be understood as ``not necessarily commutative''. For introductions to the modern theory of noncommutative lattices, see \cite{Le19,Le20}.

In both \cite{Le05} and \cite{Le20}, Leech explored the connection between certain types of antilattices, which we here call \emph{Latin}, with (semi)magic squares. In this paper, we carry out this approach a bit further. Among other things we show that finite antilattices without proper subantilattices of order $2$ can be put into one-to-one correspondence with pairs of orthogonal Latin squares.

In {\S}\ref{Sec:preliminaries}, we give some background information. We briefly review a few necessary notions from universal algebra; the reader familiar with this can safely skip that subsection. We then review some basic facts about bands, i.e., idempotent semigroups, especially \emph{rectangular} bands. In {\S}\ref{Ssec:genmat} we discuss in detail generating matrices of rectangular bands; these are essentially a combinatorial expression of the algebraic fact that rectangular bands are direct products of left zero and right zero bands. In {\S}\ref{Sec:testing}, we discuss a linear  time algorithm for testing if a band is rectangular using a generating matrix.

In {\S}\ref{Sec:antilattices} we define antilattices in a broader context of noncommutative lattice theory. In {\S}\ref{Sec:congruences}, we discuss congruences of antilattices and how they correspond to a particular type of partition of generating matrices. We also discuss simple and irreducible antilattices.

In {\S}\ref{Sec:semi} we discuss \emph{semimagic} and the aforementioned Latin antilattices. The latter are constructed via the Choi-Euler construction of semimagic squares from pairs of orthogonal Latin squares and we discuss this in some detail.

Returning to the algebraic side of the subject, in {\S}\ref{Sec:elemodd} we introduce \emph{elementary} antilattices (those with no nontrivial subantilattices) and \emph{odd} antilattices (those with no subantilattices of order $2$). In the finite case, odd antilattices turn out be precisely the same as Latin antilattices (Theorem \ref{Thm:odd_latin}). Oddness is also equivalent to the emptiness of the \emph{even graph} of an antilattice. We also connect to the paper \cite{CvKiLePi19} by showing that a \emph{regular} odd antilattice must be trivial (Theorem \ref{Thm:reg_odd}).

Finally, {\S}\ref{Sec:main} is devoted to our main result Theorem \ref{Thm:main}, which describes all the implications between the various classes of antilattices considered in this paper. We especially discuss examples based on orthogonal Latin squares which show which implications are not reversible.

\section{Preliminaries}
\label{Sec:preliminaries}

\subsection{Universal algebra}
\label{Ssec:ua}
Our approach to antilattices is a mix of both combinatorial and universal algebra methods, and so for the benefit of the
reader unfamiliar with the latter, we briefly review the needed notions. A standard reference is \cite{Bu81}.

An algebraic structure, such as a group or a lattice, is a set together with a collection of operations called its signature.
A class of algebraic structures with the same signature is called a \emph{variety} if it is axiomatized by a set of (universally
quantified) identities. For example, lattices form a variety, defined by the above identities.

For algebras with the same signature one may define homomorphisms, subalgebras and direct products. Birkhoff's fundamental
HSP (Homomorphism, Subalgebra, Product) Theorem \cite{Bi35,Bu81} states that an algebraic structure defines a
variety if and only if it is closed under homomorphisms, subalgebras and direct products.

A \emph{quasivariety} is a class of algebraic structures axiomatized by \emph{quasi-identities}, which are either identities or formulas or the form $(s_1 = t_1\ \&\ \ldots\ \&\ s_k = t_k)\implies s = t$ where the $s,t,s_i,t_i$ are terms (formulas formed from just variables and operations). A quasivariety is \emph{proper} if it is not a variety. For example, cancellative semigroups form a proper quasivariety defined
by the associative law and the cancellation quasi-identities $xy=xz\implies y=z$ and $yx=zx\implies y=z$.

The analog of Birkhoff's HSP Theorem for quasivarieties is Mal'cev's Theorem \cite{Mal}, which states that an algebraic structure is a quasivariety if and only if it is closed under subalgebras, direct products and ultraproducts. Proper quasivarieties are not closed under taking homomorphic images.

A \emph{congruence} $\alpha$ on an algebraic structure $A$ in a quasivariety is an equivalence relation which, as a set of ordered pairs, is also a subalgebra of $A\times A$. If $A$ lies in a variety, then the quotient $A/\alpha$ lies in the same variety. The First Isomorphism Theorem holds in this setting: congruences are precisely kernel relations of homomorphisms, and a homomorphism's image is isomorphic to the quotient by its kernel.

An algebraic structure $A$ is (congruence) \emph{simple} if it has only two congruences, the diagonal (or identity) congruence $\nabla$ and the universal congruence $\Delta$. (Note that in some areas of algebra such as semigroup theory, the word ``simple'' is used in a different sense and the concept we describe here is instead called \emph{congruence-free} (\cite{How95}, p.~93).

An algebraic structure $A$ is said to be \emph{irreducible} if it is not isomorphic to a direct product $B\times C$ of nontrivial algebras.
Here the meaning of ``nontrivial'' is relative to the class of algebraic structures under consideration. In the context of this paper, nontrivial will mean having cardinality greater than $1$.

\subsection{Bands, semilattices and rectangular bands}
\label{Ssec:bands}
A \emph{magma} $(B,\cdot)$ is a set $B$ with a binary operation $\cdot$. As is customary when only one binary operation is present, we abbreviate it by juxtaposition: $xy=x\cdot y$. A magma is a \emph{semigroup} if it satisfies the associative identity $(xy)z=x(yz)$ for all $x,y,z$, and a magma is \emph{idempotent} if it satisfies $xx=x$ for all $x$. A \emph{band} is an idempotent semigroup. Thus bands form a variety. Subvarieties of the variety of all bands have been classified \cite{GePe1989}.

Every band has a \emph{natural preorder} $\preceq$ defined by $x\preceq y\,\iff\,x = xyx$, and a \emph{natural partial order} $\leq$ defined by $x\leq y\,\iff\,x = xy = yx$. The latter relation refines the former in the sense that $\leq\ \subseteq\ \preceq$.

The equivalence relation associated to the natural preorder is known as \emph{Green's} $\mathcal{D}$-\emph{relation}, defined by $x\,\DD\,y\,\iff\,x\preceq y\ \&\ y\preceq x$. This relation is a congruence. \emph{Green's} $\mathcal{L}$- and $\mathcal{R}$-\emph{relations} are defined, respectively, by $x\,\mathcal{L}\,y\ \iff ( xy=x\text{ and }yx=y)$, and $x\,\mathcal{R}\,y\ \iff (xy=y\text{ and }yx=x)$.

A \emph{semilattice} is a commutative band, that is, a band satisfying the additional identity $xy = yx$ for all $x,y$. Semilattices are precisely those bands in which $\DD$ is the identity relation.

A \emph{rectangular band} is a band satisfying the \emph{anticommutativity} quasi-identity $xy = yx \implies x = y$.
Thus rectangular bands form a quasivariety, but one can say more. It turns out that rectangular bands form a variety characterized by the identities of associativity, idempotence and the identity $xyx = x$ for all $x,y$. It follows that rectangular bands are precisely those bands in which $\DD$ is the universal relation.

The main structural result about general bands is the \emph{Clifford-McLean Theorem}: Every band is a semilattice of rectangular bands. More precisely, if $B$ is a band, then $B/\DD$ is a semilattice. Thus we can visualize a band as a Hasse diagram for a semilattice, where each node is a $\DD$-class.

It follows that a simple band is either a semilattice or a rectangular band. The only simple semilattices are the $1$-element semilattice and the $2$-element semilattice. We will see below what the simple rectangular bands are.

A \emph{left zero band} is a band satisfying $xy=x$ for all $x,y$, and similarly, a \emph{right zero band} is a band satisfying $xy=y$ for all $x,y$. Left zero bands and right zero bands are rectangular. Any permutation of a left [right] zero band is an automorphism of the band. It follows that for each positive integer $n$, there is only one left [right] zero band of size $n$ up to isomorphism.

Every rectangular band $B$ is isomorphic to a direct product of a left zero band and a right zero band (\cite{Le20}, Thm.~1.2.4). Indeed, fix $a\in B$, observe that $Ba = \{ xa\mid x\in B\}$ is a left zero band, $aB = \{ ay\mid y\in B\}$ is a right zero band, and then check that the mapping $Ba\times aB\to B; (xa,ax)\mapsto x$ is an isomorphism.

It follows that an irreducible rectangular band is a left zero band or a right zero band. Further, a left [right] zero band of composite order $mn$ is isomorphic to the direct product of a left [right] zero band of order $m$ and a left [right] zero band of order $n$. We conclude that a rectangular band is irreducible if and only if it is a $1$-element band or a left zero band of prime order or a right zero band of prime order.

\begin{lemma}\label{Lem:simple_irr}
Every simple band is irreducible.
\end{lemma}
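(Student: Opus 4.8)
The plan is to prove the contrapositive: any band that fails to be irreducible carries a congruence distinct from both the diagonal congruence $\nabla$ and the universal congruence $\Delta$, and hence fails to be simple. This is really a general fact about algebras in any (quasi)variety, so nothing about the band axioms will be used beyond the correspondence between homomorphisms, kernels and direct products recalled in {\S}\ref{Ssec:ua}.

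So suppose $A$ is not irreducible. By definition this means $A\cong B\times C$ for some \emph{nontrivial} bands $B$ and $C$, and I would identify $A$ with $B\times C$. I would then consider the first-coordinate projection $\pi\colon B\times C\to B$, $(b,c)\mapsto b$. This is a homomorphism of bands, so by the fact that congruences are precisely the kernel relations of homomorphisms (recalled in {\S}\ref{Ssec:ua}) its kernel $\alpha=\ker\pi$ is a congruence on $A$; concretely, $(b,c)\,\alpha\,(b',c')$ holds exactly when $b=b'$.

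It then remains to check that $\alpha$ is neither $\nabla$ nor $\Delta$, and this is where nontriviality of the two factors enters. Because $C$ is nontrivial, choose distinct $c,c'\in C$; then for any $b\in B$ the elements $(b,c)$ and $(b,c')$ are distinct yet $\alpha$-related, so $\alpha\neq\nabla$. Because $B$ is nontrivial, choose distinct $b,b'\in B$; then for any $c\in C$ the elements $(b,c)$ and $(b',c)$ are not $\alpha$-related, so $\alpha\neq\Delta$. Thus $\alpha$ is a congruence strictly between $\nabla$ and $\Delta$, and $A$ is not simple. I expect no genuine obstacle here: the argument becomes routine once the contrapositive is taken, and the only point requiring care is keeping both factors nontrivial, so that the two witnesses $c\neq c'$ and $b\neq b'$ are guaranteed to exist.
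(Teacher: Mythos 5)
Your proof is correct and follows essentially the same route as the paper: the paper also argues that a product $B\times B'$ of nontrivial bands has nontrivial congruences, namely the kernels of the two projection homomorphisms, so a simple band must be irreducible. You have merely spelled out the verification that the projection kernel differs from both $\nabla$ and $\Delta$, which the paper leaves implicit.
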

\begin{proof}
If $B$ and $B'$ are nontrivial bands, then the product band $N\times N'$ has two nontrivial congruences which are given by the kernels of the projection homomorphisms $B\times B'\to B$ and $B\times B'\to B$. Thus a simple band must be irreducible.
\end{proof}

A nontrivial simple rectangular band, being irreducible, is a left zero band or a right zero band of prime order. Any equivalence relation on a rectangular band is a congruence, hence any partition of a rectangular band is the partition of a congruence. Thus if the order of rectangular band is greater than $2$, then the band will have a nontrivial congruence. It follows that the simple rectangular bands are the $1$-element band, the $2$-element left zero band, and the $2$-element right zero band.

Putting this together with our earlier discussion, we conclude that there are precisely $4$ simple bands: the $1$-element band, the $2$-element semilattice, the $2$-element left zero band, and the $2$-element right zero band.

We conclude this subsection with a useful elementary observation.

\begin{lemma}\label{Lem:subband}
  Let $B$ be a band and let $\alpha$ be a congruence on $B$. Then each $\alpha$-class is a subband.
\end{lemma}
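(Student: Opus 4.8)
The plan is to prove the lemma by verifying directly that each $\alpha$-class is closed under the band multiplication; the remaining subband conditions, associativity and idempotence, are then inherited automatically from $B$, so closure is the only thing requiring argument. First I would fix an arbitrary $\alpha$-class and pick a representative $a$ lying in it, so that the class is precisely $\{x\in B\mid x\,\alpha\,a\}$. This set is nonempty (it contains $a$), so it suffices to show it is a subsemigroup.

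The key step is to take any two elements $x,y$ of this class, so that $x\,\alpha\,a$ and $y\,\alpha\,a$, and then exploit the compatibility of $\alpha$ with the operation. Because $\alpha$ is a congruence, the two relations $x\,\alpha\,a$ and $y\,\alpha\,a$ may be multiplied together to yield $xy\,\alpha\,aa$. At this point the band axiom enters decisively: since $B$ is idempotent we have $aa=a$, and therefore $xy\,\alpha\,a$. Hence $xy$ again lies in the chosen class, which establishes closure and completes the verification that the class is a subband.

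The point worth emphasizing, rather than a genuine obstacle, is that idempotence is exactly what makes this work. In an arbitrary semigroup the product $xy$ would only be guaranteed to be $\alpha$-related to $aa$, and $aa$ need not lie in the same $\alpha$-class as $a$; it is the equality $aa=a$ that forces the product back into the class. Since there is no difficulty beyond recording this observation, I expect the proof to be short, with the only substantive line being the passage from $xy\,\alpha\,aa$ to $xy\,\alpha\,a$ via idempotence.
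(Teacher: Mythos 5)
Your proof is correct and uses exactly the paper's argument: multiply the congruence relations and apply idempotence ($aa=a$) to conclude the product stays in the class. The paper's version is just a one-line compression of the same idea.
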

\begin{proof}
  This follows from idempotence: if $a\,\alpha\,b$ then $ab\,\alpha\,aa=a$.
\end{proof}

%\begin{lemma}\label{Lem:band_prod}
%  If $B_1, B_2$ are bands, then the direct product $B_1\times B_2$ contains as a subband an isomorphic copy of each $B_i$.
%\end{lemma}
%
%\begin{proof}
%Fix $b\in B_2$ and let $\overline{B_1} = \{ (x,b)\mid x\in B_1\}$. Then $\overline{B_1}$ is a subband because for all $x,y\in B_1$,
%$(x,b)(y,b) = (xy,bb) = (xy,b)$. The map $B_1\to \overline{B_1}; x\mapsto (x,b)$ is evidently an isomorphism.
%The case of $B_2$ is similar.
%\end{proof}

\subsection{Generating matrices}
\label{Ssec:genmat}
The preceding considerations lead to a different representation of rectangular bands. A rectangular band is determined by a rectangular array \cite{Le20} called a \emph{generating matrix}. In particular we have the following result.

\begin{proposition}[\cite{Le20}, p.~21]
A band $(B,\cdot)$ of order $n$ is rectangular if and only if there exist $p,q$ such that $n = pq$, and a $p\times q$ matrix $G$ with distinct entries from $B$, called a \emph{generating matrix} for $B$, such that
\[
(G)_{ij}\cdot (G)_{kl} = (G)_{il}
\]
for all $i,k\in \{1,\ldots,p\}$, $j,l\in \{1,\ldots,q\}$.
\end{proposition}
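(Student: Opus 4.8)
The plan is to prove the two implications separately, relying throughout on the structure theorem quoted above: every rectangular band is isomorphic to a direct product of a left zero band and a right zero band, via the fixed-$a$ construction $B\cong Ba\times aB$.

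For the easier ``if'' direction, suppose such a matrix $G$ exists. Since its $pq=n$ entries are distinct elements of $B$, the matrix lists every element of $B$ exactly once, so the displayed formula describes the entire multiplication of $B$. As $B$ is assumed to be a band, it remains only to verify the rectangular identity $xyx=x$. Writing $x=(G)_{ij}$ and $y=(G)_{kl}$, the formula gives $xy=(G)_{il}$ and hence $xyx=(G)_{il}\cdot(G)_{ij}=(G)_{ij}=x$, as required. (Should one not wish to take the band axioms for granted, they follow just as directly: idempotence is the case $k=i$, $l=j$, and associativity reduces to computing both $(xy)z$ and $x(yz)$ as $(G)_{in}$ for $z=(G)_{mn}$.)

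For the ``only if'' direction, let $B$ be a rectangular band of order $n$. By the structure theorem we may write $B\cong L\times R$ with $L$ a left zero band and $R$ a right zero band; set $p=|L|$ and $q=|R|$, so that $n=pq$. Enumerate $L=\{l_1,\dots,l_p\}$ and $R=\{r_1,\dots,r_q\}$, and define the $p\times q$ matrix $G$ by $(G)_{ij}=(l_i,r_j)$. The entries are pairwise distinct because the pairs $(l_i,r_j)$ are, and there are $pq=n$ of them. The multiplication rule is then immediate from the coordinatewise product: since $L$ is left zero and $R$ is right zero,
\[
(G)_{ij}\cdot(G)_{kl}=(l_i l_k,\,r_j r_l)=(l_i,r_l)=(G)_{il},
\]
which is exactly the generating-matrix identity.

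The one step requiring genuine care is that the matrix cells be in honest bijection with the elements of $B$: distinctness of the entries together with the count $pq=n$ is what guarantees that $G$ captures all of $B$. In the ``only if'' direction this bijectivity is supplied by the structure theorem, and in the ``if'' direction it is built into the hypothesis, so in the form above the obstacle is mild. It becomes slightly more substantial if one insists on the concrete matrix $(G)_{ij}=u_iv_j$ attached to enumerations $Ba=\{u_i\}$ and $aB=\{v_j\}$: there the work is to recover $u_i=(u_iv_j)a$ and $v_j=a(u_iv_j)$ from a single product in order to prove distinctness, using $aya=a$ and the idempotence of the one-sided zero elements; the product formula then follows from the identity $v_ju_k=a$. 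This is the only place where one must argue rather than merely read off coordinates.
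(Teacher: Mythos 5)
Your proof is correct, and it follows essentially the route the paper itself relies on: the paper states this proposition without proof (citing Leech), but the surrounding preliminaries set up exactly your argument, namely the decomposition $B\cong Ba\times aB$ of a rectangular band into a left zero band and a right zero band, from which the generating matrix and its multiplication rule $(G)_{ij}\cdot(G)_{kl}=(G)_{il}$ are read off coordinatewise. Both directions check out, including your careful closing remark that in the concrete $(G)_{ij}=u_iv_j$ formulation one recovers the coordinates via $u_i=(u_iv_j)a$ and $v_j=a(u_iv_j)$ using the identity $xyx=x$.
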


\[
\xymatrix{
\ar@{=}[ddddd]\ar@{=}[rrrrrr]&&&&&&\ar@{=}[ddddd]\\
&x\ar@{-}[l]\ar@{-}[u]\ar@{-}[rrrr]\ar@{-}[ddd]&&&&x \bullet y\ar@{-}[r]\ar@{-}[u]\ar@{-}[ddd] &\\
&&&&&&\\
&&&&&&\\
&y \bullet x\ar@{-}[l]\ar@{-}[d]\ar@{-}[rrrr]&&&&y\ar@{-}[r]\ar@{-}[d]&\\
\ar@{=}[rrrrrr]&&&&&&\\
   }
\]

The rows of a generating matrix consist of the $\mathcal{R}$-classes and the columns consist of the $\mathcal{L}$-classes. Since
any simultaneous permutation of rows and columns will preserve the relations of being in the same row or same column, such a
permutation will transform the generating matrix into another one for the same rectangular band.

If a generating matrix for a band has size $n=pq$, then the pair $(p,q)$ is an invariant called its \emph{type}. If $p=q$, the rectangular band is called \emph{square}. If $p=1$ or $q=1$, the band is called \emph{flat}. Any flat rectangular band is either a left zero band or a right zero band. Any rectangular band of prime order is flat.

A $p\times q$ generating matrix for a rectangular band on the elements $1,\ldots,pq$ is said to be in \emph{normal form} if, for each $i\in \{1,\ldots,p\}$, the $i$th row consists of the elements $(i-1)q+1,\ldots,iq$ in order. Every finite rectangular band is isomorphic to one with generating matrix in normal form, just by relabeling the band elements. It follows that finite rectangular bands are determined up to isomorphism by their types.

Let $B$ be a rectangular band, and let $I$ and $J$ be, respectively, a left zero band and a right zero band such that $B\cong I\times J$.
If $\alpha$ is a congruence of $B$, then there exist congruences $\alpha_{\ell}$ of $I$ and $\alpha_r$ of $J$ such that, for $i_1, i_2\in I$, $j_1, j_2\in J$, $(i_1,j_1)\,\alpha\,(i_2,j_2)$ if and only if $i_1\,\alpha_{\ell}\,i_2$ and $j_1\,\alpha_r\,j_2$ (\cite{How95}, p.114).

If we interpret congruences in terms of their associated partition, then congruences in rectangular bands have a simple description using generating matrices \cite{Le05}. In particular, any congruence on a rectangular band is characterized by a \emph{cartesian partition} of its generating matrix, that is, a partition of the matrix induced by a partition of its rows and a partition of its columns. In this case, each equivalence class of the partition of the band elements is, in fact, a congruence class. By permuting rows and columns, the matrix can be arranged so that elements in the same block of the cartesian partition lie in a submatrix.

For example, here is a cartesian partition of a generating matrix for a rectangular band of size $15$, along with the corresponding quotient generating matrix, where the entries of the latter are labeled by the entries in the upper left corner of each submatrix of the former.
\[
\begin{array}{|ccc|cc|}
\hline
1 & 2 & 3 & 4 & 5 \\
6 & 7 & 8 & 9 & 10 \\
\hline
11 & 12 & 13 & 14 & 15 \\
\hline
\end{array}
\qquad
\begin{array}{|cc|}
\hline
\overline{1} & \overline{4} \\
\overline{11} & \overline{14} \\
\hline
\end{array}
\]

For finite rectangular bands, the conclusion of Lemma \ref{Lem:subband} can be seen combinatorially: a congruence on a band corresponds to a cartesian partition of a generating matrix, and each submatrix in the partition is itself a generating matrix, hence corresponds to a subband.

Finally, let $B$ and $C$ be rectangular bands of types $(p,q)$ and $(r,s)$, respectively, and with generating matrices $G$ and $H$, respectively. Then the direct product $B\times C$ is of type $(pr,qs)$ and has generating matrix $G\otimes H$ where $\otimes$ denotes the Kronecker or tensor product of matrices.

\section{Testing rectangularity}
\label{Sec:testing}

Testing whether a magma on an $n$-element set satisfies a particular identity typically involves looking up the values of the operation in its $n\times n$ Cayley table. For example, testing if a magma is idempotent can be done in linear time $O(n)$ just by checking the $n$ values $xx$ for each $x$. On the other hand, the best known deterministic algorithms, such as Light's Associativity Test, for testing if a magma is a semigroup have a worst-case runtime of $O(n^3)$ steps. Thus testing if a magma is a band takes at worst $O(n^3)$ steps.

If a magma is already known to be a band, then testing if it is rectangular using its Cayley table takes $O(n^2)$ steps to verify the identity $xyx=x$ or equivalently, the quasi-identity $xy=yx\implies x=y$. However, by using a generating matrix, testing for rectangularity can be done in linear time $O(n)$. This is shown by the following four-step algorithm.

\begin{enumerate}
\item Choose an arbitrary element $a$ and place it in the top left corner of the matrix.
\item Fill in the first row with all elements of the form $ay$.
\item Fill in the first column with all elements of the form $xa$.
\item For each $x$ in the first column and for each $y$ in the first row, compute $u = xy$ and $v = yx$. Place $u$ in the intersection of the row of $x$ and the column of $y$ and verify that $v=a$.
\end{enumerate}

Step 1 can be performed in constant time $O(1)$. Steps 2, 3 and 4 require linear time $O(n)$. Thus the total running time of the algorithm is linear.

Let $q$ be the length of the first row and let $p$ be the length of the first column.
The algorithm detects if a band is not rectangular if, for any reason, the matrix fails to fill up with all the band entries,
for instance, if $pq < n$ or if the same element occurs in two different entries. If $pq=n$ and the matrix fills up with all band entries, then the band is rectangular. To see this, note that if there were distinct elements $x$ and $y$ such that $xy=yx$, then there would exist two locations in the matrix with the same entry, a contradiction.

What remains is to show that the matrix we have constructed is a generating matrix, that is, for distinct elements $x$ and $y$, $xy$ should be in the same row as $x$ and the same column as $y$. To see this, notet that the elements $xa$ and $ya$ are in the first column and in the same row as, respectively, $x$ and $y$. Similarly, the elements $ax$ and $ay$ are in the first row and in the same column as, respectively, $x$ and $y$. Thus $xy = [xa\cdot ax][ya\cdot ay] = xa\cdot axya\cdot ay = xa\cdot a\cdot ay = xa\cdot ay$.

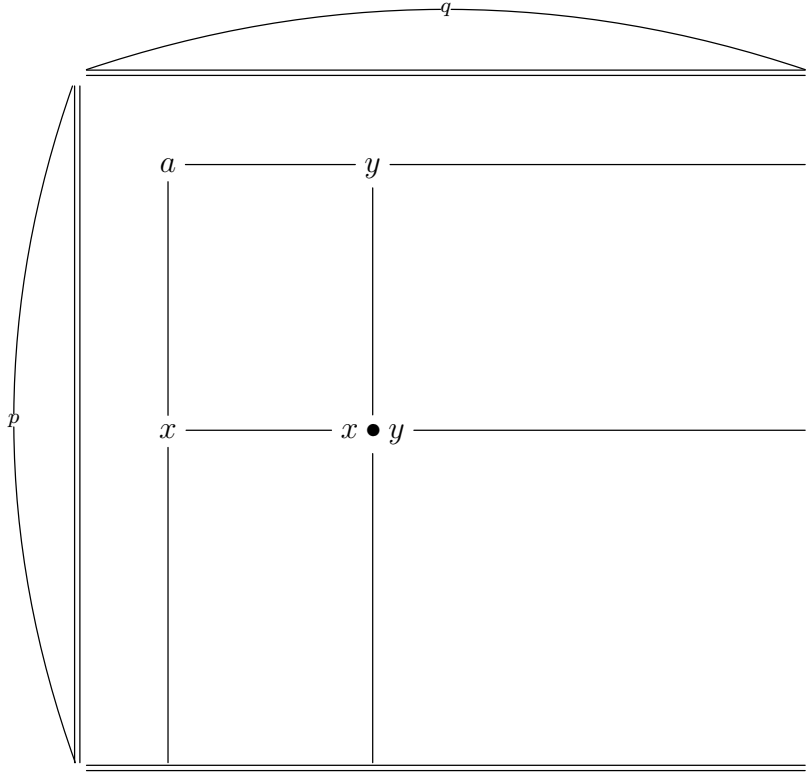
\begin{figure}[th]
\begin{center}
\[
\xymatrix{
\ar@{=}[dddddddd]\ar@{-}@/_2pc/[dddddddd]|{p}\ar@{=}[rrrrrrrr]\ar@{-}@/^2pc/[rrrrrrrr]|{q}&&&&&&&&\ar@{=}[dddddddd]\\
&a \ar@{-}[rr]\ar@{-}[ddd]  && y\ar@{-}[rrrrr]\ar@{-}[ddd]&&&&& \\
&&&&&&&&&&&&&&\\
&&&&&&&&&&&&&&\\
&x\ar@{-}[rr]\ar@{-}[dddd]  && x \bullet y\ar@{-}[rrrrr]\ar@{-}[dddd]&&&&& \\
&&&&&&&&&&&&&&\\
&&&&&&&&&&&&&&\\
&&&&&&&&&&&&&&\\
\ar@{=}[rrrrrrrr]&&&&&&&&\\
   }
\]
\caption{Construction of the generating matrix of type $(p,q)$ in linear time. Note that $a = y \bullet x$}
\end{center}
\end{figure}

Finally, we note that this algorithm only works if we know \emph{a priori} that the given magma is a band. It does not allow us to bypass associativity testing. Indeed, consider the idempotent magma given by the following Cayley table:
\[
\begin{array}{c|cccc}
\cdot & 1 & 2 & 3 & 4 \\ \hline
    1 & 1 & 2 & 1 & 1 \\
    2 & 1 & 2 & 3 & 1 \\
    3 & 3 & 4 & 3 & 1 \\
    4 & 1 & 1 & 1 & 4
\end{array}
\]
This magma is not a semigroup because $(1\cdot 2)\cdot 3 = 2\cdot 3 = 3$, but $1\cdot (2\cdot 3) = 1\cdot 3 = 1$.
Nevertheless, if we follow the algorithm starting with the element $1$, we fill in a $2\times 2$ ``generating matrix''
\[
\begin{array}{|cc|}
\hline
    1 & 2 \\
    3 & 4 \\
\hline
\end{array}
\]

\section{Antilattices}
\label{Sec:antilattices}
\subsection{Definitions}
A \emph{double band} $(N,\land,\lor)$ is a set $N$ together with two associative and idempotent operations $\land$, $\lor$.
In particular, the \emph{reducts} $(N,\lor)$ and $(N,\land)$ are bands.

A \emph{quasilattice} $(N,\land,\lor)$ is a double band satisfying the following pair of absorption laws.
\begin{align}
x\land (y\lor  x\lor  y)\land x &= x\,, \label{quasi_and_abs}\\
x\lor  (y\land x\land y)\lor  x &= x\,. \label{quasi_or_abs}
\end{align}
These identities express the duality $x\land y\land x = x$ if and only if $y\lor x\lor y = y$, that is, $x\preceq y$ under $\land$ if and only if $y\preceq x$ under $\lor$.

An \emph{antilattice} $(N,\land,\lor)$ is a double band such that the reducts $(N,\land)$ and $(N,\lor)$ are rectangular bands. Thus besides associativity and idempotence, antilattices satisfy the anticommutativity identities:
\begin{align}
x \land y \land x = x \label{anti_and}\\
x \lor y \lor x = x  \label{anti_or}
\end{align}

It is evident that every antilattice is a quasilattice. Note that since $\vee$ does not occur in \eqref{anti_and} and $\lor$ does not occur in \eqref{anti_or}, any two rectangular band structures on the same underlying set determine an antilattice.

\subsection{Generating matrices and types}
Any antilattice $N$ is determined by a pair of generating matrices, say $M$ (meet) of order $(p,q)$ and $J$ (join) of order $(r,s)$ where $pq = rs = n$. The quadruple $(p,q,r,s)$ is an antilattice invariant called the \emph{type} of $N$. Since $pq=rs$, the two matrices have the same number of entries. Any pair of matrices with the same number of entries is called \emph{compatible}.

If both reduct bands of an antilattice are square, i.e. $p = q = r = s$, or equivalently, if both generating matrices are square, the antilattice itself is called \emph{square}. If one of the reducts is square, the antilattice is called \emph{semisquare}. Square antilattices exist only for square orders, however not every antilattice of square order is square.

If both reduct bands of an antilattice are flat, the antilattice itself is called \emph{flat}; if only one of them is flat, the antilattice is called \emph{semiflat}.  Any antilattice of prime order is flat.

It will often be convenient if one of the generating matrices of an antilattice is in normal form. By convention, we will choose the meet matrix $M$ for this purpose. This can always be achieved by suitable relabelling of the antilattice elements \cite{CvKiLePi19}.

\subsection{Example}
Let $N=\{0,1,2,3\}$ and let the operations be given by the following tables:

\[
\begin{tabular}{r|rrrr}
$\land$ & 0 & 1 & 2 & 3\\
\hline
    0 & 0 & 1 & 2 & 3 \\
    1 & 0 & 1 & 2 & 3 \\
    2 & 0 & 1 & 2 & 3 \\
    3 & 0 & 1 & 2 & 3
\end{tabular}
\qquad
\begin{tabular}{r|rrrr}
$\lor $& 0 & 1 & 2 & 3\\
\hline
    0 & 0 & 2 & 2 & 0 \\
    1 & 3 & 1 & 1 & 3 \\
    2 & 0 & 2 & 2 & 0 \\
    3 & 3 & 1 & 1 & 3
\end{tabular}
\]
$N$ may given by generating matrices:
\begin{center}
$M = $ \begin{tabular}{|cccc|}
\hline
0 & 1 & 2 & 3 \\
\hline
\end{tabular}
\qquad
\medskip
$J = $ \begin{tabular}{|cc|}
\hline
0 & 2 \\
3 & 1 \\
\hline
\end{tabular}
\end{center}

\noindent $N$ is therefore of type $(1,4,2,2)$, and is both semiflat and semisquare.

\section{Congruences, quotients and products of antilattices, simple and irreducible antilattices}
\label{Sec:congruences}

Any pair of generating matrices with entries from the same set $N$ defines an antilattice on $N$. However, understanding subantilattices, quotients and products of antilattices is more complicated than in the rectangular band case because both operations must be considered.

\subsection{Congruences and simplicity}
Since congruences in rectangular bands are described by cartesian partitions of their generating matrices, the same is true for antilattices. For an antilattice $N$ with generating matrices $M$ and $J$, cartesian partitions of $M$ and $J$ are \emph{compatible} if they induce the same partition of $N$. Thus every congruence on $N$ can be described by a pair of compatible cartesian partitions.

Since both reducts of an antilattice are bands, Lemma \ref{Lem:subband} has the following immediate corollary.

\begin{corollary}\label{Cor:cong_sub}
Let $N$ be an antilattice and let $\alpha$ be a congruence of $N$. Then every $\alpha$-congruence class is a subantilattice.
\end{corollary}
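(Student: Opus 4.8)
The plan is to reduce the statement directly to Lemma \ref{Lem:subband} by working one operation at a time. First I would unpack the definition of a congruence on the antilattice $N$: by definition a congruence $\alpha$ is an equivalence relation on $N$ compatible with every operation in the signature, here both $\land$ and $\lor$. Consequently $\alpha$ is \emph{simultaneously} a congruence on the reduct band $(N,\land)$ and a congruence on the reduct band $(N,\lor)$. This observation is the only conceptual move, and it is immediate from the definition of congruence for multi-operation algebras.

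Next I would apply Lemma \ref{Lem:subband} twice. Applied to the band $(N,\land)$, it gives that each $\alpha$-class is closed under $\land$; applied to the band $(N,\lor)$, it gives that each $\alpha$-class is closed under $\lor$. Since a given $\alpha$-class is literally the same subset of $N$ in both applications, we conclude that each $\alpha$-class is closed under both operations at once.

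Finally I would note that closure under both operations is all that is required. Because antilattices form a variety---defined by the identities of associativity, idempotence, and the two anticommutativity identities $x\land y\land x = x$ and $x\lor y\lor x = x$---any subset of $N$ closed under $\land$ and $\lor$ automatically inherits these identities and hence is itself an antilattice, i.e.\ a subantilattice.

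I do not expect a genuine obstacle: the entire content is carried by Lemma \ref{Lem:subband}, whose one-line proof uses only idempotence. The two points worth stating explicitly are that a congruence of the two-operation structure restricts to a congruence of each single-operation reduct (immediate from the definition), and that closure under both operations already certifies that the subset is a subantilattice (immediate from antilattices being a variety). Everything else is bookkeeping.
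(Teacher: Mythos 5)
Your proof is correct and is exactly the paper's argument: the paper derives this corollary immediately from Lemma \ref{Lem:subband} by noting that a congruence of the antilattice restricts to a congruence on each band reduct $(N,\land)$ and $(N,\lor)$, so each class is closed under both operations. Your additional remark that closure under both operations suffices because antilattices form a variety is sound bookkeeping that the paper leaves implicit.
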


Recall that a quasilattice is \emph{simple} if its only congruences are the diagonal congruence $\nabla$ and the universal congruence $\Delta$.

\begin{proposition}[\cite{Le20}]
A simple quasilattice is either a lattice or an antilattice.
\end{proposition}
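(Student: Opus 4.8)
The plan is to locate a single canonical congruence on the quasilattice $N$ and then let simplicity do the rest. The natural candidate is Green's $\DD$-relation, since for any band it is a congruence whose quotient is a semilattice (Clifford--McLean), and, crucially, it is the \emph{identity} relation exactly when the band is a semilattice and the \emph{universal} relation exactly when the band is rectangular. Writing $\DD_\land$ and $\DD_\lor$ for the $\DD$-relations of the reducts $(N,\land)$ and $(N,\lor)$ respectively, each is by itself a congruence of the corresponding reduct band.

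The key step is to show that these two relations coincide. Recall the natural preorders $x\preceq_\land y\iff x=x\land y\land x$ and $x\preceq_\lor y\iff x=x\lor y\lor x$. The absorption laws \eqref{quasi_and_abs}--\eqref{quasi_or_abs} are exactly the statement that $x\preceq_\land y\iff y\preceq_\lor x$. Since each $\DD$-relation is the symmetric part of the corresponding preorder, for all $x,y\in N$,
\[
x\,\DD_\land\,y \iff (x\preceq_\land y\ \&\ y\preceq_\land x) \iff (y\preceq_\lor x\ \&\ x\preceq_\lor y) \iff x\,\DD_\lor\,y,
\]
so $\DD_\land=\DD_\lor$; call this common equivalence relation $\DD$.

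Now $\DD$, being equal to $\DD_\land$, is compatible with $\land$, and being equal to $\DD_\lor$, is compatible with $\lor$; hence $\DD$ is a congruence of the whole quasilattice $N$. By simplicity, either $\DD=\nabla$ or $\DD=\Delta$. If $\DD=\nabla$, then both $\DD_\land$ and $\DD_\lor$ are the identity relation, so both reducts are semilattices and $N$ is a lattice. If $\DD=\Delta$, then both are the universal relation, so both reducts are rectangular bands and $N$ is an antilattice. These two cases exhaust the possibilities.

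The main obstacle is the coincidence $\DD_\land=\DD_\lor$: without it one could \emph{a priori} have one reduct a semilattice and the other rectangular, and such mixed structures would not be excluded. It is precisely the duality encoded in the absorption laws that fuses the two $\DD$-relations into one, forcing a single dichotomy rather than four independent cases. Everything after this coincidence is bookkeeping against the stated characterizations of semilattices and rectangular bands in terms of $\DD$.
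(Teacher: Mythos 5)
Your proof is correct. The paper itself offers no argument for this proposition --- it is quoted from Leech's book --- so there is nothing internal to compare against; what you have written is essentially the standard argument (the quasilattice version of the Clifford--McLean theorem): the duality encoded in the absorption laws, which the paper states explicitly as $x\preceq y$ under $\land$ iff $y\preceq x$ under $\lor$, forces $\DD_\land=\DD_\lor$, this common relation is a congruence of the double band, and simplicity yields the dichotomy.

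One step deserves a line more than you give it. In the case $\DD=\nabla$ you conclude that ``both reducts are semilattices and $N$ is a lattice,'' but a double band whose two reducts are semilattices need not be a lattice: commutativity alone does not give the lattice absorption laws. What rescues the step is that $N$ is a quasilattice. Under commutativity and idempotence one has $y\lor x\lor y=x\lor y$ and $x\land z\land x=x\land z$, so identity \eqref{quasi_and_abs} collapses to $x\land(x\lor y)=x$ and identity \eqref{quasi_or_abs} to $x\lor(x\land y)=x$, which are precisely the absorption laws of a lattice. With that one-line verification inserted, the case analysis is complete; the case $\DD=\Delta$ needs no such care, since an antilattice is by definition just a double band with rectangular reducts.
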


There are only two simple lattices, namely the trivial lattice and the $2$-element lattice. Hence understanding simple quasilattices ``reduces'' to studying simple antilattices.

However, unlike the situation for rectangular bands described earlier, there is no easy classification of simple antilattices. It is known that there are no simple antilattices of odd prime order (\cite{Le20}, Lem.~3.2.1) or of order $4$ (\cite{Le20}, Prop.~3.2.2). On the other hand, simple antilattices exist for all composite orders greater than $5$ (\cite{Le20}, Thm.~3.2.3). The smallest example given by Leech's construction has $M$ a $2\times 3$ matrix in normal form and
\[
J = \begin{array}{|ccc|}
\hline
1 & 2 & 4 \\
5 & 6 & 3 \\
\hline
\end{array}
\]
A classification of finite simple antilattices seems way out of reach.

\subsection{Products of antilattices}
Let $N$ and $N'$ be antilattices with corresponding generating matrix pairs $(M,J)$ and $(M',J')$ and of types
$(p,q,r,s)$ and $(p',q',r',s')$. Then the product antilattice $N\times N'$ is of type $(pp',qq',rr',ss')$ and has generating
matrix pair $(M\otimes M', J\otimes J')$ where again, $\otimes$ denotes the Kronecker product of matrices.

Unlike the situation for rectangular bands, determining which antilattices are irreducible is not a straightforward task. For instance, it is easy to see that an antilattice of prime order is irreducible, but there exist irreducible antilattices of composite orders.

\begin{proposition}\label{Prp:simple_irred}
Each simple antilattice is irreducible, but there exist irreducible antilattices that are not simple.
\end{proposition}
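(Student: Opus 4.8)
The plan is to treat the two assertions separately, since the first is a direct transcription of Lemma~\ref{Lem:simple_irr} from bands to antilattices and the second is an explicit example. For ``simple $\Rightarrow$ irreducible'' I would argue by contraposition: suppose a simple antilattice $N$ decomposes as $N \cong N' \times N''$ with $N'$ and $N''$ both nontrivial. The coordinate projections $\pi'\colon N'\times N''\to N'$ and $\pi''\colon N'\times N''\to N''$ are antilattice homomorphisms (the product operations are defined coordinatewise, so each projection preserves both $\land$ and $\lor$), and hence their kernels are congruences of $N$. The kernel of $\pi'$ identifies two pairs exactly when they agree in the first coordinate; because $N''$ is nontrivial this congruence is strictly larger than the diagonal $\nabla$, and because $N'$ is nontrivial it is strictly smaller than the universal congruence $\Delta$. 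Thus $N$ carries a proper nontrivial congruence, contradicting simplicity.

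For the second assertion I would exhibit a single minimal witness. Take $N=\{0,1,2\}$ with $x\land y = x$ and $x\lor y = y$, so that $(N,\land)$ is the left zero band and $(N,\lor)$ is the right zero band. Since any two rectangular band structures on a common underlying set determine an antilattice, $N$ is an antilattice; it is flat of prime order $3$, hence irreducible by the observation that prime-order antilattices are irreducible. To see that $N$ is not simple I would use flatness: for a left or right zero band every equivalence relation is a congruence, so any partition of $N$ is simultaneously a congruence of both reducts, hence of the antilattice. In particular $\{\{0,1\},\{2\}\}$ is a proper nontrivial congruence, so $N$ is not simple. (One could instead cite the fact, recalled above, that no simple antilattices of odd prime order exist.)

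I do not expect a genuine obstacle here: once the congruence/product machinery of the preceding sections is available, both halves are short. The one point deserving care is the bookkeeping in the first part, namely verifying that each projection kernel is distinct from both $\nabla$ and $\Delta$ \emph{precisely} because the two factors are nontrivial; and in the second part one must choose a witness whose irreducibility and non-simplicity are both transparent. A flat antilattice of odd prime order is the most economical choice, though composite-order irreducible non-simple antilattices also exist, which underscores that for antilattices irreducibility is strictly weaker than simplicity.
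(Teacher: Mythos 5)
Your proof is correct, and its two halves relate differently to the paper's. The first half --- coordinate projections of a nontrivial direct product have kernels strictly between the diagonal and the universal congruence --- is exactly the argument of Lemma~\ref{Lem:simple_irr}, which the paper's proof simply invokes; you have just spelled out the bookkeeping. For the existence half you take a genuinely different witness: the flat antilattice of prime order $3$ (left zero meet, right zero join). This is legitimate and more economical than the paper's choice: irreducibility is automatic because a product of nontrivial factors has composite order, and non-simplicity is immediate because every equivalence relation is a congruence of a left zero band and of a right zero band, hence of this antilattice (alternatively, as you note, one can cite Leech's result quoted in {\S}\ref{Sec:congruences} that no simple antilattices of odd prime order exist). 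The paper instead exhibits an order-$6$ antilattice of type $(2,3,2,3)$ admitting the nontrivial cartesian partition $\{\{1,4\},\{2,3,5,6\}\}$. The reason for that heavier choice is contextual: the paragraph preceding the proposition already observes that prime-order antilattices are trivially irreducible, so the paper's example is meant to show the phenomenon at a composite order, where irreducibility cannot be read off from the order alone and requires the (somewhat hand-waved) claim that the given antilattice is not a product of antilattices of orders $2$ and $3$. Your example proves the proposition exactly as stated; the paper's proves slightly more. One caution: your appeal to ``every equivalence relation is a congruence'' is valid precisely because both reducts are flat; for non-flat rectangular bands only cartesian partitions yield congruences, so that step would not transfer to examples like the paper's.
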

\begin{proof}
The first assertion follows by the same argument as in the proof of Lemma \ref{Lem:simple_irr}. For the second assertion, consider the antilattice of order $6$ given by the following generating matrices:
\[
\begin{array}{|c|cc|}
\hline
1 & 2 & 3  \\
4 & 5 & 6   \\
\hline
\end{array}
\qquad
\begin{array}{|c|cc|}
\hline
1 & 2 & 5  \\
4 & 6 & 3   \\
\hline
\end{array}\,.
\]
This antilattice is clearly not a product of antilattices of order $2$ and $3$, and hence, is irreducible. However, the partition $\{\{1,4\},\{2,3,5,6\}\}$ is cartesian, and therefore the antilattice is not simple.
\end{proof}

\section{Semimagic and Latin antilattices}
\label{Sec:semi}

\subsection{Semimagic antilattices}
A (classical) \emph{semimagic square} is an $n\times n$ array consisting of distinct entries from $\{1,\ldots,n^2\}$ such that the sums of the numbers in each row and in each column are equal. A semimagic square is \emph{magic} if the two diagonal also sum to that same value. Any simultaneous permutation of rows and columns preserve the property of being semimagic.

An antilattice is said to be \emph{semimagic} if it has a generating matrix pair $(M,J)$ where $M$ is in normal form and $J$ is a semimagic square. A semimagic antilattice is \emph{magic} if $J$ is a magic square.

Semimagic and magic antilattices are not necessarily simple.
As an example, consider the \emph{D\"{u}rer antilattice}, where $J$ is the magic square from D\"{u}rer's \emph{Melancholia I}.
\smallskip

\centerline{\includegraphics[width=0.33\textwidth]{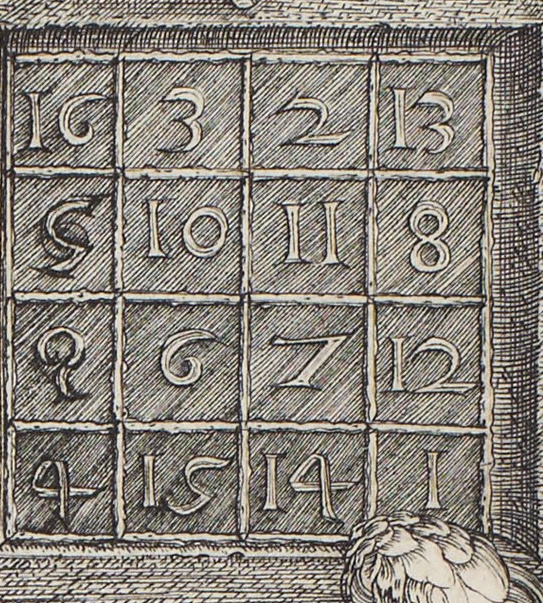}}

\noindent D\"{u}rer's antilattice is given by the matrices:

\begin{center}
\begin{tabular}{|cccc|}
\hline
1& 2 & 3 & 4  \\
5& 6 & 7 & 8  \\
9& 10 & 11 & 12  \\
13& 14 & 15 & 16  \\
\hline
\end{tabular}
\qquad
\begin{tabular}{|cccc|}
\hline
16& 3 & 2 & 13  \\
5& 10 & 11 & 8  \\
9& 6& 7& 12  \\
4& 15 & 14 & 1  \\
\hline
\end{tabular}
\end{center}

Some of the congruences of the D\"{u}rer antilattice were found by Leech \cite{Le05}. Here they are in
terms of the corresponding partitions:
\[
\begin{array}{ll}
\alpha_1: & [1,2,3,4,13,14,15,16|5,6,7,8,9,10,11,12]\\
\alpha_2: & [2,4,5,8,9,12,13,16|2,3,6,7,10,11,14,15]\\
\alpha_{12}: & [1,4,13,16|2,3,14,15|5,8,9,12|6,7,10,11]\\
\beta_1: & [1,2,13,14|3,4,15,16|5,6,9,10|7,8,11,12]\\
\beta_2: & [1,3,13,15|2,4,14,16|5,7,9,11|6,8,10,12]\\
\gamma_1:& [1,4,5,8|2,3,6,7|9,12,13,16|10,11,14,15]\\
\gamma_2: & [1,4,9,12|2,3,10,11|5,8,13,16|6,7,14,15]\\
\delta_1: & [1,13|2,14|3,15|4,16|5,9|6,10|7,11|8,12]\\
\delta_2: &[1,4|2,3|5,8|6,7|9,12|10,11|13,16|14,15]\\
\epsilon_{11}: & [1,13|2,14|3,15|4,16|5|9|6|10|7|11|8|12]\\
\epsilon_{12}: & [1|13|2|14|3|15|4|16|5,9|6,10|7,11|8,12]\\
\epsilon_{21}: & [1,4|2|3|5,8|6|7|9,12|10|11|13,16|14|15]\\
\epsilon_{22}: & [1|4|2,3|5|8|6,7|9|12|10,11|13|16|14,15]\,.
\end{array}
\]
Using a SageMath \cite{Sage} program, we found the following additional partitions:
\[
\begin{array}{ll}
\psi_1: & [1,4,13,16|2,3,14,15|5,8|6,7|9,12|10,11]\\
\psi_2: & [1,4,13,16|2,14|3,15|5,8,9,12|6,10|7,11]\\
\psi_3: &[1,13|2,3,14,15|4,16|5,9|6,7,10,11|8,12]\\
\psi_4: & [1,4|2,3|5,8,9,12|6,7,10,11|13,16|14,15]\\
\phi_1: & [1,4,13,16|2,14|3,15|5,8|6|7|9,12|10|11]\\
\phi_2: & [1|2,3|4|5,9|6,7,10,11|8,12|13|14,15|16]\\
\phi_3: & [1,13|2,3,14,15|4,16|5|6,7|8|9|10,11|12]\\
\phi_4: & [1,4|2|3|5,8,9,12|6,10|7,11|13,16|14|15]
\end{array}
\]

The Hasse diagram of the lattice of congruences for D\"{u}rer's antilattice is depicted below.

\begin{tiny}
\[
{\xymatrix{
             &   & \Delta  \ar@{-}[dl] \ar@{-}[dr]                  &           &                       \\
             & \alpha_1 \ar@{-}[dl]\ar@{-}[d]\ar@{-}[dr] &    & \alpha_2  \ar@{-}[dl]\ar@{-}[d]\ar@{-}[dr]&   \\
    \beta_1          \ar@{-}[ddrrr]            & \beta_2 \ar@{-}[ddrr]&  \alpha_{12} \ar@{-}[dll]\ar@{-}[dl]\ar@{-}[dr]\ar@{-}[drr] & \gamma_1  \ar@{-}[ddll]& \gamma_2 \ar@{-}[ddlll]  \\
    \psi_2     \ar@{-}[dd]\ar@{-}[drrr]\ar@{-}[ddrrr]                 & \psi_1 \ar@{-}[ddl]\ar@{-}[ddrrr]\ar@{-}[d]&    & \psi_3 \ar@{-}[d]\ar@{-}[ddll]\ar@{-}[ddr] & \psi_4  \ar@{-}[dlll]\ar@{-}[ddlll]\ar@{-}[ddl] \\
                        & \delta_2\ar@{-}[ddrr]\ar@{-}[ddrrr] &   & \delta_1 \ar@{-}[ddll]\ar@{-}[ddlll] &    \\
    \phi_1   \ar@{-}[d]\ar@{-}[drrr]                   & \phi_2 \ar@{-}[d]\ar@{-}[drrr]&    & \phi_4 \ar@{-}[dll]\ar@{-}[d]& \phi_3 \ar@{-}[d]\ar@{-}[dllll]  \\
        \varepsilon_{11}         \ar@{-}[drr]            & \varepsilon_{12}   \ar@{-}[dr]  &    & \varepsilon_{21}   \ar@{-}[dl]   & \varepsilon_{22}  \ar@{-}[dll]    \\
                     &   & \nabla                  &           &                       \\
}}
\]
\end{tiny}

\subsection{Latin antilattices}
A \emph{Latin square} is an $n\times n$ array filled with $n$ different symbols, where each symbol occurs exactly once in each row and in each column. Two Latin squares are said to be \emph{orthogonal} if, when they are superimposed, the ordered pairs in each entry are all distinct.

Let $N$ be an antilattice of type $(p,q)$ with generating matrix pair $(M,J)$ where the meet matrix $M$ is in normal form. Relabel the elements of $N$ with ordered pairs $(i,j)$ indicating the position of the element in $M$ where $1\leq i\leq p$, $1\leq j\leq q$. In other words, we replace the number $(i-1)q+j$ with the pair $(i,j)$. For $k=1,2$, let $J_k$ be the matrix of $k$th components of the entries of $J$. $N$ is said to be a \emph{Latin antilattice} if $J_1$ and $J_2$ are orthogonal Latin squares.

\begin{theorem}\label{Thm:latin_semimagic}
Every Latin antilattice is semimagic.
\end{theorem}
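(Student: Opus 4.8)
The plan is to translate the Latin condition into an explicit arithmetic formula for the numerical entries of the join matrix $J$ and then read off the magic-sum property directly. Because $M$ is in normal form, the element occupying position $(i,j)$ of $M$ carries the numerical label $(i-1)q + j$. Under the pair relabelling used to define Latin antilattices, the entry of $J$ in position $(a,b)$ is the pair $\bigl((J_1)_{ab}, (J_2)_{ab}\bigr)$, so its original numerical value is
\[
J_{ab} = q\bigl((J_1)_{ab} - 1\bigr) + (J_2)_{ab}.
\]
Since $J_1$ and $J_2$ are Latin squares (and orthogonal), $J$ must be a square array; writing $m$ for its side length we have $p = q = m$ and $n = m^2$, and each of $J_1, J_2$ uses the symbols $1,\ldots,m$, each appearing exactly once in every row and every column.

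First I would compute the sum of the entries in a fixed row $a$. Substituting the formula above and separating the two components gives
\[
\sum_{b=1}^{m} J_{ab} = q\sum_{b=1}^{m}(J_1)_{ab} - qm + \sum_{b=1}^{m}(J_2)_{ab}.
\]
Because $J_1$ is Latin, the multiset $\{(J_1)_{ab} : 1 \le b \le m\}$ is exactly $\{1,\ldots,m\}$, so $\sum_b (J_1)_{ab} = m(m+1)/2$, independent of $a$; the same holds for $J_2$. Hence every row sum equals the single constant $q\cdot \tfrac{m(m+1)}{2} - qm + \tfrac{m(m+1)}{2}$. An identical computation, now using that each \emph{column} of a Latin square also contains every symbol once, shows that every column sum equals the very same constant.

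Once I have established that both row sums and column sums are constant, the key point to nail down---and the only place where care is needed---is that these two constants coincide, since \emph{semimagic} demands a single common value for rows and columns. This falls out of the displayed computation, whose value is manifestly symmetric in the roles played by rows and columns; alternatively one can argue globally that, the entries being a permutation of $\{1,\ldots,n\}$, all $m$ row sums and all $m$ column sums must each equal $\tfrac{1}{m}\cdot\tfrac{n(n+1)}{2}$. The final ingredient is that the entries of $J$ are genuinely distinct and exhaust $\{1,\ldots,n\}$: this is where \emph{orthogonality} of $J_1$ and $J_2$ enters, guaranteeing that the pairs $\bigl((J_1)_{ab},(J_2)_{ab}\bigr)$ are pairwise distinct, hence that the numerical labels $J_{ab}$ run over all of $\{1,\ldots,m^2\}$. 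With distinctness and the common line sum in hand, $J$ satisfies the definition of a semimagic square, so $N$ is a semimagic antilattice. I expect no real obstacle beyond bookkeeping; the essential content is simply the classical Choi--Euler observation that $q(J_1-1)+J_2$ is semimagic whenever $J_1,J_2$ are orthogonal Latin squares.
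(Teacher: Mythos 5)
Your proof is correct and follows essentially the same route as the paper's: both arguments are the Choi--Euler construction, expressing each entry of $J$ as $q\bigl((J_1)_{ab}-1\bigr)+(J_2)_{ab}$ and using the fact that each symbol of a Latin square appears once per row and column to show every line of $J$ sums to the same constant $\tfrac{n(n^2+1)}{2}$. Your version merely spells out the bookkeeping (and the distinctness of entries, which in the antilattice setting is already guaranteed by $J$ being a generating matrix) more explicitly than the paper does.
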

\begin{proof}
This follows from the method discovered independently by Choi Seok-jeong (1646--1715) and Leonhard Euler (1707--1783) that constructs a semimagic square from any pair of orthogonal Latin squares (\cite{CoDi}, p.~12). Let $N$ be a Latin antilattice of order $n^2$ with meet matrix $M$ in normal form and join matrix $J$. As above, we associate to each entry $(i-1)n+j$ of $J$ the ordered pair $(i,j)$, $1\leq i,j\leq n$. Let $J_1$ and $J_2$ be the matrices of components of the entries of $J$. Since each $J_k$ is Latin, in any row or column of $J$, $i$ and $j$ range over all values from $1$ to $n$. Thus the sum of the entries $(i-1)n + j$ in any row or column of $J$ is found by summing over all $i$ and all $j$. This sum has the same value $n(n^2+1)/2$ and so $J$ is a semimagic square.
\end{proof}

We illustrate the construction as follows.
\[
J_1 =
\begin{array}{|ccc|}
\hline
3 & 1 & 2 \\
1 & 2 & 3 \\
2 & 3 & 1 \\
\hline
\end{array}
\qquad
J_2 =
\begin{array}{|ccc|}
\hline
2 & 1 & 3 \\
3 & 2 & 1 \\
1 & 3 & 2 \\
\hline
\end{array}
\quad\rightarrow\quad
J =
\begin{array}{|ccc|}
\hline
(3,2) & (1,1) & (2,3) \\
(1,3) & (2,2) & (3,1) \\
(2,1) & (3,3) & (1,2) \\
\hline
\end{array}
\]
We replace each pair $(i,j)$ with the number $3(i-1) + j$ to get
\[
J = \begin{array}{|ccc|}
\hline
8& 1& 6 \\
3& 5& 7 \\
4& 9& 2 \\
\hline
\end{array}\,
\]
which is the Lo-Shu semimagic square. Thus the \emph{Lo-Shu antilattice} is determined by the join matrix $J$ and the meet matrix $M$ in normal form.

It is also illuminating to consider a variant of the reverse process. Starting with $J$ as above, subtract $1$ from each entry:
\[
\begin{array}{|ccc|}
\hline
7 & 0 & 5 \\
2 & 4 & 6 \\
3 & 8 & 1 \\
\hline
\end{array}
\]
Then write each entry in base $3$:
\[
\begin{array}{|ccc|}
\hline
21 & 00 & 12 \\
02 & 11 & 20 \\
10 & 22 & 01 \\
\hline
\end{array}
\]
Then detach the two squares:
\[
\begin{array}{|ccc|}
\hline
2 & 0 & 1 \\
0 & 1 & 2 \\
1 & 2 & 0 \\
\hline
\end{array}
\qquad
\begin{array}{|ccc|}
\hline
1 & 0 & 2 \\
2 & 1 & 0 \\
0 & 2 & 1 \\
\hline
\end{array}
\]
This is a pair of orthogonal Latin squares. Adding $1$ to each entry gives us $J_1$ and $J_2$ above.

\smallskip

\begin{proposition}\label{Prp:Euler}
  A Latin antilattice of order $n$ exists if and only if $n = k^2, k > 2$, except for $n = 36$.
\end{proposition}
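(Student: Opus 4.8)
The plan is to reduce the statement to the classical existence theorem for pairs of orthogonal Latin squares, exploiting the correspondence built into the definition of a Latin antilattice: up to the labelling fixed by its meet matrix, the data of a Latin antilattice is exactly the data of a pair of orthogonal Latin squares.

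First I would pin down the order. By definition a Latin antilattice has a meet matrix $M$ in normal form and a join matrix $J$ whose component matrices $J_1$ and $J_2$ are orthogonal Latin squares. A Latin square is a $k\times k$ array on $k$ symbols, so $J_1$, $J_2$, and hence $J$, are $k\times k$; and since the entries of $J$ are precisely the pairs $(i,j)$ indexing the positions of $M$, the matrix $M$ is $k\times k$ as well. Therefore the antilattice has exactly $k^2$ elements, so every Latin antilattice has square order $n=k^2$, and its existence furnishes a pair of orthogonal Latin squares of order $k$.

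For the converse half of the correspondence, I would start from a pair $(L_1,L_2)$ of orthogonal Latin squares of order $k$, form the $k\times k$ array $J$ whose $(a,b)$ entry is the pair $(L_1(a,b),L_2(a,b))$, and relabel each pair $(i,j)$ as $(i-1)k+j$; orthogonality guarantees the $k^2$ entries are distinct. Taking $M$ in normal form, both $M$ and $J$ are $k\times k$ arrays with distinct entries from a common $k^2$-element set, so by the characterization of generating matrices recalled in \S\ref{Ssec:genmat} each is a generating matrix of a rectangular band, and since the $\land$- and $\lor$-axioms are independent, the pair $(M,J)$ determines an antilattice. By construction its component matrices are $L_1$ and $L_2$, so it is Latin of order $k^2$. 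This closes the correspondence: a Latin antilattice of order $k^2$ exists if and only if a pair of orthogonal Latin squares of order $k$ exists.

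It then remains to invoke the classical theorem — Euler's $36$-officers problem together with its resolution by Bose, Shrikhande and Parker — stating that a pair of orthogonal Latin squares of order $k$ exists exactly when $k\neq 2$ and $k\neq 6$ (\cite{CoDi}). Combining this with $n=k^2$ and discarding the degenerate one-element case $k=1$ and the case $k=2$ (for which no orthogonal pair exists), we obtain existence precisely for $n=k^2$ with $k>2$ and $k\neq 6$, i.e.\ for all square $n=k^2$ with $k>2$ except $n=36$, as claimed. The hard part is entirely contained in this classical theorem — in particular the existence half for all $k\geq 3$, $k\neq 6$, is the deep Bose--Shrikhande--Parker result — but it is available in the literature; the work specific to the antilattice setting is only the routine verification that the Latin condition on $J$ is exactly orthogonality of $J_1,J_2$ and that every suitable square array is a legitimate generating matrix.
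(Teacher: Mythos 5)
Your proposal is correct and takes essentially the same route as the paper: the paper's proof is a one-line citation of the Bose--Shrikhande--Parker disproof of Euler's conjecture (orthogonal Latin squares of order $k$ exist iff $k\neq 2,6$), which is exactly the classical theorem you invoke. The only difference is that you spell out the routine correspondence between Latin antilattices of order $k^2$ and pairs of orthogonal Latin squares of order $k$, which the paper treats as immediate from the definition.
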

\begin{proof}
This follows readily from the well-known disproof of Euler's conjecture about the existence of orthogonal pairs of Latin squares \cite{BoSh59,BoShPa60}.
\end{proof}

\begin{proposition}\label{Prp:semi_not_latin}
  There exist semimagic antilattices which are not Latin.
\end{proposition}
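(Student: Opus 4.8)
The plan is to prove the proposition by exhibiting one explicit antilattice that is semimagic by construction but whose digit decomposition fails to be a pair of Latin squares. Conveniently, the D\"urer antilattice introduced earlier is already such an example: since its join matrix $J$ is the D\"urer magic square, it is in particular semimagic, and its meet matrix $M$ is in normal form of type $(4,4)$. So the D\"urer antilattice is a semimagic antilattice, and it suffices to show that it is not Latin.

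To test Latin-ness I would simply run the decomposition from the definition of a Latin antilattice: relabel each entry $e$ of $J$ by the pair $(i,j)$ giving its position in the normal-form meet matrix, namely $i=\lceil e/4\rceil$ and $j=e-4(i-1)$, and then read off the component matrices $J_1=(i)$ and $J_2=(j)$. The key point is that only a single line needs to be inspected. The top row of $J$ is $(16,3,2,13)$, which decomposes as $(4,4),(1,3),(1,2),(4,1)$, so the first row of $J_1$ is $(4,1,1,4)$. This is not a permutation of $\{1,2,3,4\}$, hence $J_1$ is not a Latin square, so $J_1$ and $J_2$ cannot be a pair of orthogonal Latin squares. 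Therefore the D\"urer antilattice is semimagic but not Latin, which proves the proposition.

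There is essentially no hard computation here once the right example is chosen, so the main obstacle is really the selection of the example; the two things worth being careful about are the bookkeeping of the decomposition and the reduction to checking $J_1$ alone. The latter is justified by the observation that, since the entries of $J$ are distinct, the relabelled pairs $(i,j)$ are automatically all distinct, so the superimposed square always has distinct ordered pairs: orthogonality is automatic, and the only way to fail to be Latin is for $J_1$ or $J_2$ to fail to be a Latin square. I would also include a brief remark explaining why the smallest such example has order $16$ rather than $9$: when $n=3$, the constant line-sum $15$ forbids any line from containing two entries of a common block $\{1,2,3\},\{4,5,6\},\{7,8,9\}$ (and a short case analysis rules out two entries of a common residue class as well), so every $3\times 3$ semimagic square is automatically Latin. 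The degeneracy we exploit, two entries of a line lying in one block, first becomes possible at $n=4$, as witnessed by $1+2+15+16=34$ in the top row of $J$.
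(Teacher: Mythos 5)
Your proposal is correct and takes essentially the same approach as the paper: the paper's entire proof is the one-line assertion that the D\"urer antilattice's join matrix cannot arise from a pair of orthogonal Latin squares, and you simply supply the explicit verification (the first row of $J_1$ being $(4,1,1,4)$, plus the observation that orthogonality is automatic) that the paper leaves implicit. One minor caveat on your non-essential closing aside: for $3\times 3$ semimagic squares the line-sum argument alone does not immediately exclude two entries from the middle block $\{4,5,6\}$ (a line summing to $15$ could a priori be $\{4,5,6\}$ itself), so that block also needs the same kind of short case analysis you invoke for the residue classes.
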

\begin{proof}
  The join matrix of the D\"{u}rer antilattice cannot be constructed from a pair of orthogonal Latin squares.
\end{proof}

\section{Elementary and odd antilattices}
\label{Sec:elemodd}
A subantilattice $M$ of an antilattice $N$ is said to be \emph{trivial} if $|M|=1$ and it is said to be \emph{proper} if $M$ is a proper subset of $N$. An antilattice is said to be \emph{elementary} if it has no proper nontrivial subantilattices.

An antilattice is said to be \emph{odd} if it has no subantilattice with $2$ elemens.

\begin{proposition}\label{Prp:elem_odd_simple}
    Every elementary antilattice is odd and simple.
\end{proposition}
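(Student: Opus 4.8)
The plan is to establish the two conclusions separately, in each case by contraposition, and in each case by manufacturing a proper nontrivial subantilattice out of whatever object witnesses the failure of the conclusion. The single tool doing essentially all of the work is Corollary~\ref{Cor:cong_sub}, which guarantees that every congruence class of an antilattice is itself a subantilattice.

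First I would treat simplicity. Suppose $N$ is not simple, so it carries a congruence $\alpha$ with $\nabla \ne \alpha \ne \Delta$. Because $\alpha \ne \nabla$ there are distinct $a,b$ with $a\,\alpha\,b$, so the class $C = [a]_\alpha$ has at least two elements and is therefore nontrivial. Because $\alpha \ne \Delta$ some pair of elements lies in distinct $\alpha$-classes, so $\alpha$ has at least two classes and hence $C \subsetneq N$ is proper. By Corollary~\ref{Cor:cong_sub}, $C$ is a subantilattice, so $C$ is a proper nontrivial subantilattice and $N$ is not elementary. Contrapositively, every elementary antilattice is simple. The one point requiring attention is that the two hypotheses on $\alpha$ must be combined on a single class: $\alpha \ne \nabla$ supplies the nontriviality of $C$ while $\alpha \ne \Delta$ supplies its properness, and we need one class enjoying both.

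Next I would treat oddness, which is almost immediate. If $M$ is a subantilattice of $N$ with $|M| = 2$, then $M$ is nontrivial, so elementariness forbids $M$ from being proper; hence $M = N$. Thus an elementary antilattice can contain a two-element subantilattice only by being one, i.e. only if $|N| = 2$; so for $|N| > 2$ no two-element subantilattice exists and $N$ is odd.

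The genuine obstacle is exactly this degenerate boundary, and I think it is worth flagging honestly rather than glossing over. A two-element antilattice (each reduct being a two-element left or right zero band) has only singleton proper subantilattices, so it has no proper nontrivial subantilattice and is therefore elementary; its only congruences are $\nabla$ and $\Delta$, so it is also simple; yet it is visibly a two-element subantilattice of itself and so is \emph{not} odd. Consequently the oddness half of the proposition is to be read for antilattices of order greater than $2$---the trivial and two-element antilattices being the degenerate exceptions---which is precisely the regime of the Latin antilattices of order $k^2$ with $k>2$ studied later. Apart from isolating this edge case, the argument uses no computation beyond the single invocation of Corollary~\ref{Cor:cong_sub}.
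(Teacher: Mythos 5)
Your proof is correct and follows essentially the same route as the paper's: simplicity by contraposition from Corollary~\ref{Cor:cong_sub} (a congruence $\alpha$ with $\nabla\ne\alpha\ne\Delta$ has a class that is simultaneously non-singleton and proper, hence a proper nontrivial subantilattice), and oddness read off from the definitions. Your handling of the simplicity half is in fact slightly more careful than the paper's, which asserts without comment that a nontrivial congruence has a class that is both proper and not a singleton; you correctly note that the two hypotheses $\alpha\ne\nabla$ and $\alpha\ne\Delta$ must be combined on one class, and that they can be (at least two classes forces every class to be proper).

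The edge case you flag is genuine, not pedantry, and the paper glosses over it. The paper's proof dismisses oddness with ``just follows from the definitions,'' which tacitly assumes that a $2$-element subantilattice is proper; this fails when $|N|=2$. Under the paper's literal definitions, a $2$-element antilattice has no proper nontrivial subantilattice (so it is elementary), has exactly the two congruences $\nabla$ and $\Delta$ (so it is simple), yet is a $2$-element subantilattice of itself and hence not odd --- consistently with Proposition~\ref{Prp:odd_square}, since it is not square, and with Proposition~\ref{Prp:odd_quasi}, since it violates the defining quasi-identities. So the proposition, read literally, fails at order $2$, and your restriction of the oddness conclusion to $|N|>2$ (equivalently, an implicit convention excluding order $2$, or reading ``subantilattice'' in the definition of odd as ``proper subantilattice'') is exactly the repair needed; the same caveat propagates to Corollary~\ref{Cor:elem_square}, which is likewise false for the order-$2$ elementary antilattices without it.
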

\begin{proof}
That an elementary antilattice $N$ is odd just follows from the definitions. If $N$ is not simple, then it is has a nontrivial congruence with a congruence class which is a proper subset of $N$ and not a singleton. By Corollary \ref{Cor:cong_sub}, $N$ is not elementary.
\end{proof}

The converse is false, as we will see later.

The class of elementary antilattices is (trivially) closed under taking subantilattices and homomorphic images, but evidently not direct products, since the direct product of nontrivial simple antilattices is not simple.

\begin{proposition}\label{Prp:odd_quasi}
The class of all odd antilattices is a quasi-variety.
\end{proposition}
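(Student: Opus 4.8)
The plan is to produce an explicit axiomatization of odd antilattices by quasi-identities; since a class axiomatized by quasi-identities is by definition a quasivariety, and the antilattice identities (associativity, idempotence, and $x\land y\land x = x$, $x\lor y\lor x = x$) are themselves quasi-identities, this will suffice. The starting point is to understand what a two-element subantilattice can look like. Let $\{a,b\}$ with $a\neq b$ be closed under both operations. First I would analyze closure under the rectangular band reduct $(N,\land)$: using $x\land y\land x = x$, a short case analysis over the four possible values of the pair $(a\land b,\ b\land a)\in\{a,b\}^2$ shows that only two survive, namely $a\land b = a,\ b\land a = b$ (i.e.\ $a\,\mathcal{L}\,b$ under $\land$) and $a\land b = b,\ b\land a = a$ (i.e.\ $a\,\mathcal{R}\,b$ under $\land$); the remaining two assignments force $a\land b\land a = b$, hence $a = b$, a contradiction. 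The identical dichotomy holds for the reduct $(N,\lor)$.

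Consequently a two-element subantilattice on distinct elements must realize exactly one of four combinations, with its $\land$-type and its $\lor$-type each being $\mathcal{L}$ or $\mathcal{R}$. Each such combination is a conjunction of four equations in two variables $x,y$. I would then forbid each combination by a quasi-identity whose conclusion is $x = y$. For instance the $\mathcal{L}/\mathcal{L}$ pattern yields
\[
(x\land y = x\ \&\ y\land x = y\ \&\ x\lor y = x\ \&\ y\lor x = y)\ \implies\ x = y,
\]
and the $\mathcal{L}/\mathcal{R}$, $\mathcal{R}/\mathcal{L}$, $\mathcal{R}/\mathcal{R}$ patterns give three analogous sentences, for four quasi-identities in total.

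To finish I would argue the equivalence. When $x = y$ every hypothesis holds trivially and so does the conclusion, so each of the four sentences is a genuine quasi-identity. An antilattice $N$ satisfies all four precisely when there is no pair of distinct elements fitting any of the four closure patterns, which by the dichotomy above is exactly the assertion that $N$ has no two-element subantilattice, i.e.\ that $N$ is odd. Hence the odd antilattices are precisely the models of the antilattice identities together with these four quasi-identities, and therefore form a quasivariety.

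The main obstacle is the case analysis establishing that closure of a distinct pair under a rectangular band operation forces the clean $\mathcal{L}$/$\mathcal{R}$ dichotomy; one must check in particular that the ``mixed'' value assignments genuinely collapse to $a=b$, so that the four quasi-identities really do capture \emph{all} possible two-element subantilattices and nothing more. Everything after that is bookkeeping. An alternative route would invoke Mal'cev's Theorem and verify closure under subalgebras, direct products and ultraproducts directly, but the explicit quasi-identities are more informative and make the closure properties automatic.
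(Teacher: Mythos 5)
Your proposal is correct and follows essentially the same route as the paper: both classify the possible two-element subantilattices (left zero or right zero under each operation, giving four types) and then forbid each of the four patterns with a quasi-identity of the form $(x\land y = x\ \&\ y\land x = y\ \&\ \dots)\implies x=y$, which together with the antilattice identities axiomatizes the class. Your explicit case analysis showing that the ``mixed'' assignments collapse to $a=b$ is a detail the paper leaves implicit (it simply cites that a $2$-element rectangular band is left zero or right zero), but the argument is the same.
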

\begin{proof}
A $2$-element rectangular band is either a left zero band or a right zero band. Thus there are four $2$-element antilattices.
Therefore an antilattice has no $2$-element subantilattices if and only if the following quasi-identities are satisfied:
\begin{align*}
x\land y = x &\quad \& & y\land x = y &\quad \& & x\lor y = x &\quad \& & y\lor x = y\implies y = x\,,\\
x\land y = x &\quad \& & y\land x = y &\quad \& & x\lor y = y &\quad \& & y\lor x = x\implies y = x\,,\\
x\land y = y &\quad \& & y\land x = x &\quad \& & x\lor y = x &\quad \& & y\lor x = y\implies y = x\,,\\
x\land y = y &\quad \& & y\land x = x &\quad \& & x\lor y = y &\quad \& & y\lor x = x\implies y = x\,.
\end{align*}
These four together with the identities defining antilattices axiomatize odd antilattices, and therefore odd antilattices
form a quasi-variety.
\end{proof}

The question is if these quasi-identities can be replaced by identities, in which case odd antilattices form a variety. Our suspicion is that the answer is no.

\begin{conjecture}
  The quasivariety of odd antilattices is proper, that is, it is not a variety.
\end{conjecture}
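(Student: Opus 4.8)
The plan is to show that the quasivariety of odd antilattices is not closed under homomorphic images; since, as recalled in \S\ref{Ssec:ua}, a quasivariety is a variety exactly when it is closed under homomorphic images (Mal'cev together with Birkhoff), this yields properness. Concretely, it suffices to exhibit one odd antilattice $N$ and a congruence $\alpha$ such that $N/\alpha$ is \emph{not} odd, i.e. $N/\alpha$ contains a $2$-element subantilattice. Observe first that $N$ must be non-simple, so by Proposition~\ref{Prp:elem_odd_simple} and the identification of finite odd antilattices with Latin ones (Theorem~\ref{Thm:odd_latin}) the minimal examples of order $9$, such as the Lo-Shu antilattice, cannot serve: one checks directly that their only cartesian-compatible partitions are trivial, so they are simple. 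The search must therefore be carried out among larger, non-simple odd antilattices.

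The key simplification I would record is that oddness of the quotient is automatic for almost all orders. By Theorem~\ref{Thm:odd_latin} and Proposition~\ref{Prp:Euler}, a finite odd antilattice can only have order $k^2$ with $k>2$ and $k^2\neq 36$; hence every antilattice whose order lies outside the set $\{9,16,25,49,64,\dots\}$ is automatically non-odd. Thus it is enough to produce an odd $N$ carrying a nontrivial proper congruence $\alpha$ whose number of classes avoids this set, for instance a quotient of order $2$, $3$, or $6$. By Corollary~\ref{Cor:cong_sub} every $\alpha$-class is a subantilattice, so in an odd $N$ no class has exactly two elements; each class is a singleton or has size at least $3$, which constrains the admissible cartesian partitions.

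I would then phrase the search entirely in terms of orthogonal Latin squares. A Latin antilattice is recorded by a pair $(J_1,J_2)$ of orthogonal Latin squares with meet matrix $M$ in normal form, and by \S\ref{Ssec:genmat} its congruences are the partitions that are simultaneously cartesian for $M$ and for $J$. Writing elements as pairs $(i,j)$, a cartesian partition of $M$ is a grid partition of the two coordinates, and requiring it to be cartesian for $J$ amounts to demanding that, after grouping the symbols of $J_1$ and of $J_2$ and the rows and columns of $J$, the two squares assemble into a common grid of subrectangles. I would scan pairs of orthogonal Latin squares of orders $5,7,8,\dots$ (computer-assisted, as was done for the D\"urer antilattice) for one admitting such a grouping into more than one but not a perfect-square-many blocks, none of size two; any such pair yields the desired $N$ and $\alpha$.

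The step I expect to be the main obstacle is precisely this last one: producing a non-simple odd antilattice whose quotient is genuinely non-odd. The difficulty is structural. Orthogonality forces the natural partitions of $M$ into its rows and columns to meet $J$ in transversals rather than subrectangles, so those partitions are never congruences; already at order $16$ a short counting argument on which pairs of blocks can lie in a common row- or column-pair of $J$ shows that no Latin antilattice of order $16$ has a congruence with quotient of order $3$ or $4$. Non-simple Latin antilattices thus appear tightly bound to product (MacNeish) decompositions, whose congruences return Latin, hence odd, quotients, and breaking this pattern -- finding an orthogonal pair with a proper, non-product subsquare structure that still assembles into a full cartesian partition -- is the crux. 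Should the finite search prove recalcitrant, the fallback is a direct, possibly infinite, construction: take $N=C_0\sqcup C_1$ with each $C_i$ odd and with mixed products arranged so that $N/\{C_0,C_1\}$ is a prescribed $2$-element antilattice while each mixed value $a\land b$, $a\lor b$ (for $a\in C_0$, $b\in C_1$) lands strictly inside its class and away from both $a$ and $b$, so that no mixed pair is closed; the work then shifts to verifying associativity of both reducts, which is where I expect the real effort to lie.
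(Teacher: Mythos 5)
You should first be aware that the paper does not prove this statement at all: it is posed as an open conjecture, so there is no proof in the paper to compare against, and your text must stand on its own. As written it is a research plan rather than a proof, and its principal strategy --- finding a \emph{finite} odd antilattice $N$ with a congruence $\alpha$ such that $N/\alpha$ is not odd --- is not merely difficult but provably impossible, by results already in the paper. Indeed, suppose $\{A,B\}$ were a $2$-element subantilattice of $N/\alpha$ with $A\neq B$. The preimage $A\cup B$ under the quotient map is a subantilattice of $N$, hence itself a finite odd antilattice (oddness passes to subalgebras, cf.\ Proposition~\ref{Prp:odd_quasi}), and $\alpha$ restricts to a congruence on $A\cup B$ whose classes are exactly $A$ and $B$. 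But the paper shows (the proposition preceding Corollary~\ref{Cor:prime_simple}, which rests on Corollary~\ref{Cor:cong_sub} and Proposition~\ref{Prp:odd_square}) that every congruence on a finite odd antilattice has a \emph{square} number of classes; since $2$ is not a square, this is a contradiction. Hence every homomorphic image of a finite odd antilattice is odd: your computer search over orders $16,25,49,\dots$ can never terminate successfully, your intermediate targets (quotients of order $2$, $3$ or $6$) can never arise, and what you call ``the crux'' --- that congruences of Latin antilattices return odd quotients --- is a theorem, not an obstacle one might circumvent by a cleverer choice of orthogonal Latin squares.

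Consequently the only viable route is the one you relegate to a fallback: an infinite example, where the pigeonhole proof of Proposition~\ref{Prp:odd_square} is unavailable. That route does work, and the associativity verification you flag as ``the real effort'' costs nothing if the join reduct is defined as a pullback. Take $N=\mathbb{Z}\times\mathbb{Z}$ with meet $(i,j)\land(k,l)=(i,l)$; let $\Phi(i,j)=(i+j,\,i+2j)$, a bijection of $\mathbb{Z}^2$ (it is linear with determinant $1$); and define $x\lor y=\Phi^{-1}(u_x,v_y)$, where $\Phi(z)=(u_z,v_z)$. The join reduct is the pullback along $\Phi$ of the standard rectangular band on $\mathbb{Z}^2$, so both reducts are rectangular bands and $N$ is an antilattice. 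It is odd: a pair $\{x,y\}$ with $x\neq y$ is meet-closed only if $i_x=i_y$ or $j_x=j_y$, and in either case neither $x\lor y=x$ nor $x\lor y=y$ can hold, since comparing $\Phi$-coordinates in each of the four cases forces $x=y$. Both operations, $x\land y=(i_x,j_y)$ and $x\lor y=\bigl(2(i_x+j_x)-(i_y+2j_y),\,(i_y+2j_y)-(i_x+j_x)\bigr)$, are integer-linear in the coordinates, so coordinatewise reduction modulo $2$ is a congruence $\alpha$. In the order-$4$ quotient, with $\bar x=(0,0)$ and $\bar y=(0,1)$, one computes $\bar x\land\bar y=\bar y$, $\bar y\land\bar x=\bar x$, $\bar x\lor\bar y=\bar x$, $\bar y\lor\bar x=\bar y$, so $\{\bar x,\bar y\}$ is a $2$-element subantilattice and $N/\alpha$ is not odd. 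Thus odd antilattices are not closed under homomorphic images, which settles the conjecture affirmatively --- but this is precisely the step your proposal leaves unexecuted, so as it stands your argument has a genuine gap: its main branch cannot succeed, and its workable branch is only sketched.
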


\subsection{Characterisations of finite odd antilattices}
\begin{proposition}\label{Prp:odd_square}
Every finite odd antilattice is square.
\end{proposition}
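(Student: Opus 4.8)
The plan is to translate oddness into a combinatorial statement about the two generating matrices $M$ (the meet matrix, of type $(p,q)$) and $J$ (the join matrix, of type $(r,s)$), and then to extract the equalities $p=q=r=s$ by a counting argument that exploits the symmetry between $M$ and $J$.

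First I would characterize the $2$-element subantilattices directly from the generating matrices. Recall that in a rectangular band with generating matrix $G$, the product of the entry at position $(i,j)$ with the entry at position $(k,l)$ is the entry $(G)_{il}$. So if $a$ sits at $(i,j)$ and $b$ at $(k,l)$ in $M$, then $a\land b=(G)_{il}$ and $b\land a=(G)_{kj}$. Since the entries of a generating matrix are distinct, both of these products land in $\{a,b\}$ exactly when $i=k$ or $j=l$; otherwise they are genuinely new elements. Hence $\{a,b\}$ is closed under $\land$ if and only if $a$ and $b$ lie in a common row or a common column of $M$, and symmetrically $\{a,b\}$ is closed under $\lor$ if and only if $a$ and $b$ lie in a common row or column of $J$. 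Therefore $\{a,b\}$ is a subantilattice precisely when $a,b$ share a row or column of $M$ \emph{and} share a row or column of $J$.

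Oddness then says no such pair exists: whenever two elements lie in a common row or column of $M$, they must lie in distinct rows and distinct columns of $J$ (and symmetrically with the roles of $M$ and $J$ interchanged). Now I count. A fixed row of $M$ contains $q$ elements which, by oddness, occupy $q$ distinct rows of $J$ and $q$ distinct columns of $J$; hence $q\le r$ and $q\le s$. A fixed column of $M$ gives, in the same way, $p\le r$ and $p\le s$. Running the identical argument with the roles of $M$ and $J$ swapped, a row of $J$ yields $s\le p$ and $s\le q$, and a column of $J$ yields $r\le p$ and $r\le q$. These inequalities hold trivially when one of the dimensions equals $1$ (the corresponding row or column contributes no pair), so no case distinction is needed.

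Combining $q\le r$ with $r\le q$ gives $q=r$; combining $q\le s$ with $s\le q$ gives $q=s$; and $p\le r$ together with $r\le p$ gives $p=r$. Hence $p=q=r=s$, so both reducts are square and $N$ is square. I do not expect a serious obstacle: the only step requiring care is the first, namely pinning down the ``common row or common column'' dictionary for closure under each operation, and in particular verifying that the off-diagonal products $a\land b$ and $b\land a$ are forced to be new elements exactly when $a,b$ share neither a row nor a column of $M$. Once that dictionary is in place, the symmetry between $M$ and $J$ does all the work and the conclusion is immediate.
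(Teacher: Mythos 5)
Your proof is correct and takes essentially the same approach as the paper: both rest on the observation that $\{a,b\}$ is a $2$-element subantilattice exactly when $a$ and $b$ are collinear (share a row or column) in both generating matrices, and then count. The paper phrases the count as a contrapositive pigeonhole argument (assuming $p>q$ and $r\geq s$ forces two elements of a column of $M$ to be collinear in $J$), while you derive the symmetric inequalities $p,q\leq \min(r,s)$ and $r,s\leq \min(p,q)$ directly and combine them, which is the same counting idea and has the minor merit of replacing the paper's ``other cases are proved similarly'' with an explicitly symmetric argument.
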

\begin{proof}
Let $N$ be a finite antilattice of type $(p,q,r,s)$ and assume, say, $p > q$, that is, the columns of $M$ are longer than the rows. We may also assume $r \geq s$. Since $pq = rs = n$ it follows that $s < p$. Let $W$ denote the set of elements from the first column of $M$ so that $|W|=p$.
Since $p$ is greater than the number of rows of $J$, it follows from the Pigeonhole Principle, that at least one row of $J$ contains at least two elements of $W$, say $a$ and $b$. Since $a$ and $b$ are collinear in both $M$ and $J$, $\{a,b\}$ is a subantilattice of $N$, so that $N$ is not odd. The other cases are proved similarly.
\end{proof}

\begin{proposition}\label{Prp:odd_no_prime}
  The following are equivalent for an antilattice $N$.
  \begin{enumerate}
    \item $N$ has no subantilattice of nonsquare order;
    \item $N$ has no subantilattice of prime order;
    \item $N$ is odd.
  \end{enumerate}
\end{proposition}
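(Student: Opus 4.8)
The plan is to prove the cycle of implications $(1)\Rightarrow(2)\Rightarrow(3)\Rightarrow(1)$, where the first two links are elementary and essentially all the real content sits in the last.

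For $(1)\Rightarrow(2)$, I would observe that a prime $p\ge 2$ is never a perfect square, so a subantilattice of prime order is in particular a subantilattice of nonsquare order; thus if there are none of the latter there are none of the former. For $(2)\Rightarrow(3)$, I would use that $2$ is prime: by definition $N$ is odd exactly when it has no $2$-element subantilattice, and a $2$-element subantilattice is one of prime order, so the absence of prime-order subantilattices forces oddness. Both of these reduce to one-line observations once the definitions are unwound.

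The substantive direction is $(3)\Rightarrow(1)$, and here the key is to pass to subantilattices. First I would note that oddness is inherited by subantilattices: if $S\le N$ and $S$ contained a $2$-element subantilattice $T$, then $T\le N$ as well, contradicting oddness of $N$ (this is also immediate from Proposition \ref{Prp:odd_quasi}, since a quasivariety is closed under subalgebras). Hence every subantilattice $S$ of a finite odd antilattice $N$ is itself a finite odd antilattice. Now I would apply Proposition \ref{Prp:odd_square} to conclude that each such $S$ is \emph{square}, i.e.\ of type $(k,k,k,k)$; its order is then $|S|=k\cdot k=k^2$, a perfect square. Therefore no subantilattice of $N$ has nonsquare order, which is exactly $(1)$.

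The main obstacle is conceptual rather than computational: one must bridge the structural notion of a \emph{square} antilattice (equal meet- and join-dimensions, $p=q=r=s$) with the numerical notion of \emph{square order} (cardinality a perfect square), and this bridge is precisely what Proposition \ref{Prp:odd_square} supplies once we restrict to the finite setting that this subsection assumes. As a sanity check on $(3)\Rightarrow(2)$ one can argue directly without \ref{Prp:odd_square}: a subantilattice of prime order is flat, and in a flat antilattice all elements are pairwise collinear in both generating matrices, so every pair is collinear in $M$ and in $J$ and hence forms a $2$-element subantilattice --- impossible when $N$ is odd. This confirms $(3)\Rightarrow(2)$, but to obtain the stronger conclusion $(1)$ for \emph{every} subantilattice I would still lean on Proposition \ref{Prp:odd_square} to force perfect-square order.
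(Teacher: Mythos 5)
Your proposal is correct and follows essentially the same route as the paper: the first two implications are dispatched as immediate from the definitions, and the substantive implication $(3)\Rightarrow(1)$ is obtained exactly as in the paper, by noting that subantilattices of an odd antilattice are odd (via Proposition \ref{Prp:odd_quasi}, closure of a quasivariety under subalgebras) and then invoking Proposition \ref{Prp:odd_square} to force square order. Your extra direct argument for $(3)\Rightarrow(2)$ via flatness of prime-order antilattices is a valid (and nice) cross-check, but it does not change the substance of the proof.
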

\begin{proof}
  (1)$\implies$(2) is obvious and (2)$\implies$(3) follows from the definition of odd. If $N$ is odd, then any finite subantilattice of $N$, which is also odd (by Proposition \ref{Prp:odd_quasi}), must have square order (by Proposition \ref{Prp:odd_square}). Thus (3)$\implies$(1).
\end{proof}

In the finite case, odd antilattices have a particularly nice characterization.

\begin{theorem}\label{Thm:odd_latin}
A finite antilattice $N$ is odd if and only if it is Latin.
\end{theorem}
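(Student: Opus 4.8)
The plan is to reduce the statement to a single combinatorial condition on the two generating matrices and then to recognise that condition as the definition of a pair of orthogonal Latin squares. The preliminary step I would establish is a description of two-element subantilattices. Using the generating-matrix identity $(G)_{ij}(G)_{kl}=(G)_{il}$, a pair of distinct elements $\{a,b\}$ is closed under a rectangular-band operation if and only if $a$ and $b$ lie in a common row or a common column of the corresponding generating matrix: sharing a row makes them $\mathcal{R}$-related and sharing a column makes them $\mathcal{L}$-related, whereas if they agree in neither coordinate then $a\cdot b$ lands in a third cell. Applying this to both reducts, $\{a,b\}$ is a subantilattice precisely when $a$ and $b$ are collinear in $M$ \emph{and} collinear in $J$. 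Hence $N$ is odd if and only if no two distinct elements are simultaneously collinear in $M$ and in $J$, and this is the reformulation on which both directions will rest.

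For the forward direction I would assume $N$ odd. By Proposition \ref{Prp:odd_square} it is square, of order $k^2$ say; I put $M$ in normal form and label each element by its position $(i,j)$ in $M$, so that two elements are collinear in $M$ exactly when their labels agree in the first or the second coordinate. Fixing a row of $J$, its $k$ entries are pairwise collinear in $J$, so oddness forbids any two of them from being collinear in $M$; that is, any two of these $k$ labels differ in both coordinates. Consequently the $k$ first coordinates in that row are distinct elements of $\{1,\dots,k\}$, hence a permutation of $\{1,\dots,k\}$, and likewise for the second coordinates; the same applies to each column. This says exactly that $J_1$ and $J_2$ are Latin squares. Orthogonality is then immediate, since the superimposed pairs are the labels of the entries of $J$, and each of the $k^2$ elements of $N$ occurs exactly once in $J$, so each ordered pair in $\{1,\dots,k\}^2$ occurs once. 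Thus $N$ is Latin.

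For the converse I would assume $N$ Latin, so $J_1$ and $J_2$ are orthogonal $k\times k$ Latin squares and $M$ is in normal form with position labels as above. Given distinct elements $a,b$ collinear in $J$, say in a common row (the column case being identical), the Latin property of $J_1$ makes their first coordinates distinct and that of $J_2$ makes their second coordinates distinct. Hence $a$ and $b$ agree in neither coordinate, so they are not collinear in $M$. Therefore no two distinct elements are collinear in both matrices, and by the reformulation $N$ is odd.

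The only real work is the preliminary reformulation of two-element subantilattices via collinearity; once that is in place both directions are short counting arguments and I anticipate no serious obstacle. The one point to handle carefully is that the Latin hypothesis already forces squareness (each component matrix must be square to be a Latin square), so the converse needs no appeal to Proposition \ref{Prp:odd_square}, whereas in the forward direction squareness must first be extracted from oddness through that proposition before the position labelling is even meaningful.
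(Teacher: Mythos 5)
Your proposal is correct and follows essentially the same route as the paper's proof: both rest on the observation that a pair $\{x,y\}$ is a subantilattice precisely when $x$ and $y$ are collinear in both $M$ and $J$, and both translate oddness, via the position labelling of $M$ in normal form, into the condition that $J_1$ and $J_2$ are orthogonal Latin squares. Your write-up is somewhat more detailed than the paper's (you justify the collinearity criterion from the generating-matrix identity and separate the two directions explicitly), but the underlying argument is identical.
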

\begin{proof}
Let $N$ be an antilattice with generating matrices $M$ and $J$, and assume $M$ is in normal form. Since both Latin antilattices (by definition) and odd antilattices (by Proposition \ref{Prp:odd_square} are square, we may assume that $N$ is square. As before, relabel the elements of $N$ with ordered pairs $(i,j)$ indicating the position of $n(i-1)+j$ in $M$. Assuming $J$ has this same labeling, let $J_1$ and $J_2$ be the matrices of components of the entries of $J$.

A pair $x = (i_x,j_x)$, $y=(i_y,j_y)\in N$ forms a subalgebra of order $2$ if and only if $x$ and $y$ are collinear both in $M$ and $J$. The pair is collinear in $M$ if and only if $i_x = i_y$ or $j_x = j_y$. It is collinear in $J$ if and only if $J_1$ or $J_2$ has a repeated element in one of its rows or columns. Therefore, by finiteness, the nonexistence of a subantilattice of order $2$ is equivalent to the statement that each row and column of $J_1$ and $J_2$ is a permutation, or equivalently, that $J_1$ and $J_2$ are orthogonal Latin squares. This proves the desired equivalence.
\end{proof}

\begin{corollary}\label{Cor:elem_square}
Every elementary antilattice is square. If an elementary antilattice has order $n$, then $n = k^2, k > 2$ and  $n \neq 36$.
\end{corollary}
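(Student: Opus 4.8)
The plan is to obtain both assertions purely by chaining together the structural results already proved for odd and Latin antilattices; no new argument is required, so the real work lies only in assembling the implications in the correct order and in keeping track of the finiteness hypothesis that is tacit in the statement. Throughout I read the corollary as concerning antilattices of finite order, since both the notion of being \emph{square} and the arithmetic condition $n = k^2$ are meaningful only in the finite setting.

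First I would establish the square claim. By Proposition \ref{Prp:elem_odd_simple}, every elementary antilattice $N$ is odd. Applying Proposition \ref{Prp:odd_square}, which asserts that every finite odd antilattice is square, we conclude at once that $N$ is square. This disposes of the first sentence.

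For the order restriction I would again begin from the oddness of $N$ supplied by Proposition \ref{Prp:elem_odd_simple}. Theorem \ref{Thm:odd_latin} identifies finite odd antilattices with Latin antilattices, so $N$ is Latin. Proposition \ref{Prp:Euler} then records exactly which orders support a Latin antilattice, namely $n = k^2$ with $k > 2$, with the sole exception $n = 36$. Since $N$ is Latin of order $n$, its order must lie among these, giving $n = k^2$, $k > 2$, and $n \neq 36$, as required.

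The main obstacle is essentially bookkeeping rather than mathematics: all of the substance resides in the cited results, and what remains is to confirm that the implicit finiteness hypothesis is compatible with the hypotheses of Proposition \ref{Prp:odd_square}, Theorem \ref{Thm:odd_latin} and Proposition \ref{Prp:Euler}. It is worth observing that the second assertion genuinely strengthens the first: the square claim already forces the order to be a perfect square, and the appeal to Proposition \ref{Prp:Euler} refines this by excluding $k \leq 2$ and $n = 36$, pinning down precisely which square orders can occur.
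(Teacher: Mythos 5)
Your proof is correct and follows essentially the same route as the paper, whose own proof simply cites Theorem \ref{Thm:odd_latin} and Proposition \ref{Prp:Euler}; you have merely made explicit the intermediate steps (elementary $\Rightarrow$ odd via Proposition \ref{Prp:elem_odd_simple}, and squareness via Proposition \ref{Prp:odd_square}) that the paper leaves tacit.
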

\begin{proof}
This follows from Theorem \ref{Thm:odd_latin} and Proposition \ref{Prp:Euler}.
\end{proof}

\begin{proposition}
Let $N$ be a finite odd antilattice and let $\alpha$ be a congruence on $N$. Then there are a square number of $\alpha$-congruence classes, all of which have the same size.
\end{proposition}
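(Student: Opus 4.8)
The plan is to exploit the fact that each $\alpha$-class is itself a finite odd antilattice, hence square, and then read off the block sizes directly from a cartesian partition of the meet matrix $M$.

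First I would record the relevant structure. Since $N$ is finite and odd, it is square by Proposition \ref{Prp:odd_square}, so its meet matrix $M$ is an $n\times n$ array and $|N| = n^2$. As recalled in Section \ref{Ssec:genmat}, the congruence $\alpha$ is described by a cartesian partition of $M$: a partition of the row index set $\{1,\ldots,n\}$ into blocks $R_1,\ldots,R_a$ together with a partition of the column index set into blocks $C_1,\ldots,C_b$. The $\alpha$-classes are then exactly the submatrices indexed by $R_i\times C_j$, of which there are $ab$, each nonempty.

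Next I would identify each class as a square antilattice. By Corollary \ref{Cor:cong_sub} every $\alpha$-class is a subantilattice of $N$; since odd antilattices form a quasivariety (Proposition \ref{Prp:odd_quasi}), they are closed under subalgebras, so each class is again a finite odd antilattice, and hence square by Proposition \ref{Prp:odd_square}. The meet reduct of the class $R_i\times C_j$ is generated by the restricted submatrix of $M$ on rows $R_i$ and columns $C_j$, which has $|R_i|$ rows and $|C_j|$ columns; squareness forces $|R_i| = |C_j|$ for every pair $(i,j)$.

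Finally I would deduce the numerical conclusion. Because $|R_i| = |C_j|$ holds for all $i$ and $j$, all the $|R_i|$ and all the $|C_j|$ equal a common value $k$. Summing over the two partitions of the $n$-element index set gives $ak = bk = n$, so $a = b = n/k$; thus the number of classes is $ab = (n/k)^2$, a perfect square, while each class $R_i\times C_j$ has size $k^2$, independent of $i$ and $j$. The one step that requires care — and which I expect to be the crux — is the passage from \emph{each class is square} to the uniform equality $|R_i| = |C_j|$: I must be sure that the restricted submatrix genuinely is a generating matrix for the meet reduct of the subantilattice, so that its type really is $(|R_i|,|C_j|)$, which is precisely the cartesian-partition description of congruences on rectangular bands recalled in Section \ref{Ssec:genmat}.
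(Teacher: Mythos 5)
Your proposal is correct and follows essentially the same route as the paper's proof: squareness of $N$ via Proposition \ref{Prp:odd_square}, squareness of each class via Corollary \ref{Cor:cong_sub} and Proposition \ref{Prp:odd_quasi}, and then the cartesian-partition description of congruences. The paper compresses the final counting step into ``the desired result follows,'' whereas you spell it out (the equalities $|R_i| = |C_j|$ for all $i,j$ force a common block size $k$, hence $(n/k)^2$ classes of size $k^2$); the step you flag as the crux is exactly the fact, recorded in {\S}\ref{Ssec:genmat}, that each submatrix of a cartesian partition is itself a generating matrix.
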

\begin{proof}
Let $N$ be a finite odd antilattice Then $N$ is square (Proposition \ref{Prp:odd_square}) and each $\alpha$-congruence class, being an odd antilattice (by Corollary \ref{Cor:cong_sub} and Proposition \ref{Prp:odd_quasi}), is also square. Since the classes of $\alpha$ are given by cartesian partitions of the generating matrices of $N$, the desired result follows.
\end{proof}

\begin{corollary}\label{Cor:prime_simple}
Let $p$ be a prime. Any odd antilattice of order $p^2$ is simple.
\end{corollary}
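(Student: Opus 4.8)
The plan is to reduce the statement to an elementary divisibility observation by exploiting the proposition that immediately precedes it. Let $N$ be an odd antilattice of order $p^2$ and let $\alpha$ be any congruence on $N$. First I would record two facts about the $\alpha$-classes. By the preceding proposition, $N$ being finite and odd, all $\alpha$-classes share a common size $m$, and there are $K$ of them with $Km = p^2$. Second, by Corollary~\ref{Cor:cong_sub} each $\alpha$-class is a subantilattice; since oddness is a quasivariety (Proposition~\ref{Prp:odd_quasi}) it is closed under subalgebras, so each class is again odd and hence square by Proposition~\ref{Prp:odd_square}. Therefore the common size $m$ is a perfect square.

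Now $m$ is a perfect square dividing $p^2$. As $p$ is prime, the divisors of $p^2$ are $1, p, p^2$, of which only $1$ and $p^2$ are squares, so $m \in \{1, p^2\}$. If $m = p^2$ then $K = 1$, the single class is all of $N$, and $\alpha$ is the universal congruence $\Delta$. If $m = 1$ then every class is a singleton and $\alpha$ is the diagonal congruence $\nabla$. Hence the only congruences on $N$ are $\nabla$ and $\Delta$, which is exactly the assertion that $N$ is simple.

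I expect the corollary itself to present no real obstacle: once the preceding proposition is available, everything reduces to the fact that $p^2$ has no square divisor strictly between $1$ and $p^2$. The only care needed is bookkeeping---respecting the paper's convention that $\nabla$ is the diagonal and $\Delta$ the universal congruence, and observing that the statement is vacuous in the degenerate range (for $p=2$ there is no nontrivial odd antilattice of order $4$, since by Theorem~\ref{Thm:odd_latin} and Proposition~\ref{Prp:Euler} a Latin antilattice requires order $k^2$ with $k>2$). The genuine mathematical content lives in the preceding proposition, where the cartesian-partition structure of the generating matrices forces the congruence classes to be equally sized squares; the corollary is then pure arithmetic in $p$.
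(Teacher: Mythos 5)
Your proof is correct and matches the paper's approach: the corollary is stated there without proof, as an immediate consequence of the preceding proposition, and your argument---common class size $m$ a perfect square dividing $p^2$, hence $m\in\{1,p^2\}$, forcing $\alpha$ to be $\nabla$ or $\Delta$---is exactly the natural filling-in of that implicit deduction. The only cosmetic difference is that one could equally run the arithmetic on the (square) number of classes rather than on the class size; both reduce to the same fact that $p^2$ has no square divisor strictly between $1$ and $p^2$.
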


There are numerous odd antilattices of order $p^2$, $p$ prime, that are not elementary. This follows from a result of Heinrich and Zhu \cite{HeZh86} and is described below.

\subsection{Regular antilattices}
An antilattice is \emph{regular} if all four of Green's $\mathcal{L}$- and $\mathcal{R}$-relations (one of each for each operation) are congruences for both operations. Regular antilattices form a variety which was studied in some detail in \cite{CvKiLePi19}. The main decomposition theorem for regular antilattices is as follows.

\begin{proposition}[\cite{CvKiLePi19}, Thm.~3.3]\label{Prp:decomp}
Every nonempty regular antilattice $N$ is isomorphic to a direct product
$N_{\mathcal{L}\mathcal{L}}\times N_{\mathcal{L}\mathcal{R}}\times N_{\mathcal{R}\mathcal{L}}\times N_{\mathcal{R}\mathcal{R}}$ of flat antilattices, with each factor being unique up to isomorphism.
\end{proposition}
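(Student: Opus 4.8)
The plan is to obtain the four-fold decomposition by applying the rectangular-band factorization (left-zero $\times$ right-zero) twice: first along the $\land$-reduct of $N$, and then along the $\lor$-reduct of each of the two resulting factors. The essential point, and the only place where regularity is used, is that each of these factorizations is not merely an isomorphism of reducts but an isomorphism of antilattices.

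First I would recall that for the rectangular band $(N,\land)$ the map $x\mapsto ([x]_{\RR_\land},[x]_{\LL_\land})$ is an isomorphism onto $N/\RR_\land\times N/\LL_\land$, where $N/\RR_\land$ is a left-zero band and $N/\LL_\land$ is a right-zero band. Because $N$ is regular, $\RR_\land$ and $\LL_\land$ are congruences for $\lor$ as well as for $\land$; hence $N/\RR_\land$ and $N/\LL_\land$ carry quotient antilattice structures, and the displayed map is simultaneously a homomorphism for $\lor$. Being a bijective homomorphism for both operations, it is an isomorphism of antilattices $N\cong N/\RR_\land\times N/\LL_\land$. This is the step I would isolate as a small lemma, since it is exactly where regularity is indispensable: without it the reduct isomorphism need not respect $\lor$.

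Next, since regular antilattices form a variety, the quotients $N/\RR_\land$ and $N/\LL_\land$ are again regular, and their $\land$-reducts are respectively a left-zero and a right-zero band. I would then apply the same factorization to each of these along its $\lor$-reduct, using the $\lor$-Green relations of the quotient (which are congruences by regularity of the quotient). This writes each of $N/\RR_\land$ and $N/\LL_\land$ as a product of two antilattices, and collecting the four pieces gives $N\cong N_{\mathcal L\mathcal L}\times N_{\mathcal L\mathcal R}\times N_{\mathcal R\mathcal L}\times N_{\mathcal R\mathcal R}$. Each factor is flat: its $\land$-reduct is a homomorphic image of a left- or right-zero band, hence still a left- or right-zero band, and likewise for its $\lor$-reduct, so both reducts are flat. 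Concretely, the factor indexed by a choice $\mathcal X\in\{\RR,\LL\}$ (for $\land$) and $\mathcal Y\in\{\RR,\LL\}$ (for $\lor$) is the quotient $N/(\mathcal X_\land\vee\mathcal Y_\lor)$, and for uniqueness I would note that these four factors are canonical quotients of $N$ by joins of the Green relations; since the Green relations are determined by the two operations, the factors are independent of any choices and so unique up to isomorphism. The degenerate cases are covered by allowing one-element factors, and nonemptiness of $N$ guarantees at least the trivial decomposition.

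I expect the main obstacle to be the careful verification in the iterated step that the $\lor$-Green relations of $N/\RR_\land$ pull back, via the correspondence theorem, to the congruences $\RR_\land\vee\RR_\lor$ and $\RR_\land\vee\LL_\lor$ on $N$ (and symmetrically for $N/\LL_\land$), together with confirming at each stage that a quotient of a flat reduct remains flat. This is bookkeeping rather than a deep difficulty, but it is precisely where regularity must be invoked repeatedly, and it is what makes the two factorizations commute and yields well-defined, canonically described flat factors.
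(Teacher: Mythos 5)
The paper does not actually prove this proposition; it is imported from \cite{CvKiLePi19} (Thm.~3.3), so there is no in-paper argument to compare you against, and your proposal must be judged on its own merits. On those merits it is correct, and it is the natural argument: the map $x\mapsto([x]_{\RR_\land},[x]_{\LL_\land})$ is an isomorphism of the $\land$-reduct onto the product of a left-zero and a right-zero band, and it becomes an isomorphism of \emph{antilattices} precisely because regularity makes $\RR_\land$ and $\LL_\land$ congruences for $\lor$; since regular antilattices form a variety (as the paper records), the two quotients are again regular, so the same factorization can be applied along $\lor$, and homomorphic images of left-zero (resp.\ right-zero) bands remain left-zero (resp.\ right-zero), so the four resulting factors are flat. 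The only step you should expand is uniqueness. Identifying your factors with the canonical quotients $N/(\mathcal{X}_\land\vee\mathcal{Y}_\lor)$ shows that your \emph{construction} is choice-free, but the proposition asserts uniqueness of the factors in an \emph{arbitrary} decomposition of $N$ into four flat antilattices of the prescribed types. To close this, note that in any such direct product the Green's relations are computed componentwise (in the paper's conventions, $\RR$ is trivial on a left-zero reduct and universal on a right-zero reduct, and dually for $\LL$), and joins of componentwise congruences are again componentwise; hence each congruence $\mathcal{X}_\land\vee\mathcal{Y}_\lor$ is universal on exactly three of the factors and trivial on the remaining one, so the corresponding quotient of $N$ is isomorphic to that factor. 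This shows every decomposition has factors isomorphic to your canonical ones, which is the uniqueness claim; the rest of your write-up, including the pullback of the $\lor$-Green relations of $N/\RR_\land$ to $\RR_\land\vee\RR_\lor$ and $\RR_\land\vee\LL_\lor$, is routine bookkeeping exactly as you say.
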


Here $N_{\mathcal{L}\mathcal{L}}$ satisfies the identities $x\lor y = x = x\land y$, $N_{\mathcal{L}\mathcal{R}}$ satisfies $x\lor y = x$, $x\land y = y$, and so on. For our purposes, all that matters is that each factor is flat.

\begin{theorem}\label{Thm:reg_odd}
A regular, odd antilattice is trivial.
\end{theorem}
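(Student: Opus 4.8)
The plan is to exploit the structure theorem for regular antilattices, Proposition~\ref{Prp:decomp}, which writes any nonempty regular antilattice $N$ as a direct product $N_{\mathcal{L}\mathcal{L}}\times N_{\mathcal{L}\mathcal{R}}\times N_{\mathcal{R}\mathcal{L}}\times N_{\mathcal{R}\mathcal{R}}$ of flat antilattices. It therefore suffices to show that each of the four flat factors must be trivial; since $N$ is their product, it will then be trivial as well. The entire argument reduces to understanding flat antilattices together with the behaviour of oddness under the relevant constructions.

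First I would record the key local observation: a flat antilattice of order at least $2$ can never be odd. Indeed, in a flat antilattice both reducts are left or right zero bands, so for any two elements $a,b$ we have $a\land b\in\{a,b\}$ and $a\lor b\in\{a,b\}$, each operation simply returning one of its two arguments. Hence every two-element subset $\{a,b\}$ is closed under both $\land$ and $\lor$ and is therefore a subantilattice of order $2$. So as soon as a flat factor has two distinct elements it contains a $2$-element subantilattice.

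Next I would transfer this back to $N$. Because both operations of an antilattice are idempotent, fixing the coordinates in all but one factor yields a subantilattice of $N$ isomorphic to that factor; concretely the slice $\{x_1\}\times\cdots\times N_{i}\times\cdots\times\{x_4\}$ is closed under $\land$ and $\lor$, with the constant coordinates reproducing themselves via $x_j\cdot x_j = x_j$. Since odd antilattices form a quasivariety (Proposition~\ref{Prp:odd_quasi}), they are closed under subantilattices, so the oddness of $N$ passes to each factor $N_i$. By the previous paragraph a nontrivial flat antilattice is not odd, hence every factor must be trivial, and therefore $N$ is trivial.

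The argument is short and I do not anticipate a serious obstacle; the one point that carries the whole proof is the first observation, namely that flatness of both reducts forces $\land$ and $\lor$ to act as projections on any pair, so that pairs are automatically subantilattices. Everything else is bookkeeping: invoking the decomposition into flat pieces, and using idempotence together with the quasivariety closure of Proposition~\ref{Prp:odd_quasi} to push oddness from $N$ down to its factors.
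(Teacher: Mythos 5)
Your proposal is correct and follows essentially the same route as the paper: both invoke Proposition~\ref{Prp:decomp} and rest on the observation that in a flat factor every pair $\{a,b\}$ is closed under $\land$ and $\lor$, hence a $2$-element subantilattice. The paper argues contrapositively by embedding such a pair into $N$ via constant coordinates in the other factors, while you push oddness down to the factors through slice subantilattices; this is only a cosmetic reorganization of the same argument.
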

\begin{proof}
Suppose $N = N_{\mathcal{L}\mathcal{L}}\times N_{\mathcal{L}\mathcal{R}}\times N_{\mathcal{R}\mathcal{L}}\times N_{\mathcal{R}\mathcal{R}}$ is regular and nontrivial. Then at least one factor is nontrivial, say $N_{\mathcal{L}\mathcal{L}}$. Choosing distinct $a,b\in N_{\mathcal{L}\mathcal{L}}$, we have that $\{a,b\}$ is a subantilattice. Now fix
$c_{\ell r}\in N_{\mathcal{L}\mathcal{R}}$, $c_{r\ell}\in N_{\mathcal{R}\mathcal{L}}$, and $c_{rr}\in N_{\mathcal{R}\mathcal{R}}$. Then $\{(a,c_{\ell r},c_{r\ell},c_{rr}),(b,c_{\ell r},c_{r\ell},c_{rr})\}$ is a subantilattice of $N$. Therefore $N$ is not odd. The cases where other factors are nontrivial are handled similarly.
\end{proof}

\subsection{Even graphs}
Let $D$ be any double band. The \emph{even graph} $G(D)$ of $D$ is the graph with vertex set $D$ defined as follows:
\smallskip
Two elements $a,b \in D$ are connected by an edge if and only if they $\{a,b\}$ is a subalgebra.
\smallskip

It is not clear which class of graphs is determined by the class of all even graphs of quasilattices or antilattices.
For antilattices, the following observation is immediate.

\begin{proposition}\label{Prp:odd_empty}
  An antilattice is odd if and only if its even graph is empty.
\end{proposition}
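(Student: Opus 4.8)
The plan is to unwind the two definitions and observe that they coincide. The even graph $G(N)$ has vertex set $N$, and by definition an edge joins two elements $a,b$ precisely when $\{a,b\}$ is a subantilattice. Under the standard graph-theoretic convention that an edge joins two \emph{distinct} vertices, the edges of $G(N)$ are therefore in bijection with the two-element subsets $\{a,b\}\subseteq N$ (with $a\neq b$) that are closed under both $\land$ and $\lor$, that is, with the two-element subantilattices of $N$.

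First I would record that, by definition, $N$ is odd exactly when it has no subantilattice of order $2$. Next I would observe that $G(N)$ is empty (has no edges) exactly when no pair of distinct elements $a,b$ spans a subantilattice $\{a,b\}$, which is precisely the statement that $N$ has no two-element subantilattice. Chaining these two equivalences yields the desired biconditional.

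There is no substantive obstacle here: the proposition is a direct translation between the algebraic phrasing (``no $2$-element subantilattice'') and the graph-theoretic phrasing (``no edges''). The only point worth flagging explicitly is the convention that an edge joins two \emph{distinct} vertices; without it one might worry that the trivial one-element subantilattice $\{a\}$ could be read as a loop, but the even graph carries no loops and so they play no role. Once this convention is noted, the proof reduces to a single chain of definitional equivalences.
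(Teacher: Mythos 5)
Your proposal is correct and matches the paper's treatment: the paper states this proposition as an ``immediate'' observation, precisely because it is the definitional unwinding you give (odd means no $2$-element subantilattice; an edge of the even graph is exactly a $2$-element subantilattice). Your remark about the distinct-vertices convention is a sensible clarification, since singletons are always subalgebras by idempotence, but it does not change the substance of the argument.
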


\noindent Here ``empty'' is in the graph theoretic sense of having empty edge set.

Even graphs of antilattices may also be defined by their generating matrices.

On an antilattice $N$, define four symmetric relations as follows. For $x,y\in N$,

\noindent $x\sim_{hh} y$ if $x$ and $y$ are in the same row of $M$ and same row of $J$;

\noindent $x\sim_{vv} y$ if $x$ and $y$ are in the same column of $M$ and same column $J$;

\noindent $x\sim_{hv} y$ if $x$ and $y$ are in the same row of $M$ and same column of $J$;

\noindent $x\sim_{vh} y$ if $x$ and $y$ are in the same column of $M$ and same row of $J$.

\noindent Finally, let $x\sim y$ if any of the above is true, that is, $\sim$ is the union of $\sim_{hh}$, $\sim_{vv}$, $\sim_{hv}$ and $\sim_{vh}$.
These symmetric relations define an edge-colored graph $G(N) = (N,\sim)$ where the four colors are: $hh,vv,hv,vh$. This graph is exactly the even graph.

In the case of the D\"{u}rer antilattice, its even graph is a disjoint union of $4$ cycles, spanning the
set $1-16$. Note that in this case only two edge colors are used.

\[
{\xymatrix{
1 \ar@{-}[r] \ar@{-}[d]& 4 \ar@{-}[d] & 2\ar@{-}[r] \ar@{-}[d] & 3\ar@{-}[d] & 5 \ar@{-}[r] \ar@{-}[d]& 8 \ar@{-}[d] & 6\ar@{-}[r] \ar@{-}[d] & 7\ar@{-}[d] \\
13 \ar@{-}[r]& 16 & 14\ar@{-}[r] & 15 & 9 \ar@{-}[r]& 12 & 10\ar@{-}[r] & 11 \\
}}
\]

The Lo-Shu antilattice, on the other hand, is Latin, hence odd, and so its even graph is empty.

\section{Main Result}
\label{Sec:main}
In the previous sections we have introduced several classes of antilattices. The following diagram represents the dependencies among these classes. We prove all implications and equivalences, and give examples showing that the implications are indeed sharp.

\begin{theorem}\label{Thm:main}
The following implications and equivalences hold among classes of finite antilattices.
\[
{\xymatrix{
                         & \textrm{elementary}\ar@{=>}[d]                  &    \\
                         & \textrm{odd and simple}\ar@{=>}[ld]\ar@{=>}[rd] &   \\
\textrm{odd}\ar@{<=>}[r] & \textrm{empty even graph}\ar@{<=>}[d]           & \textrm{simple}\ar@{=>}[d]\\
                         & \textrm{Latin}\ar@{<=>}[lu]\ar@{=>}[d]          & \textrm{irreducible} \\
                         & \textrm{semimagic}\ar@{=>}[d]                   & \\
                         & \textrm{square}
}}
\]
\end{theorem}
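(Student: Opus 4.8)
The plan is to read off each arrow of the diagram as a separate assertion and verify it; since almost every edge has already been recorded as a named result earlier in the paper, the proof is largely a matter of assembling those results, with only one equivalence requiring a short composition. I would work through the diagram from the top down.

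First, the edge from \emph{elementary} to \emph{odd and simple} is precisely Proposition \ref{Prp:elem_odd_simple}. The two edges leaving \emph{odd and simple}, to \emph{odd} and to \emph{simple}, are immediate, because the source property is by definition the conjunction of the two targets. For the left block, the equivalence \emph{odd} $\Leftrightarrow$ \emph{empty even graph} is Proposition \ref{Prp:odd_empty}, the equivalence \emph{odd} $\Leftrightarrow$ \emph{Latin} (the edge running up from Latin) is Theorem \ref{Thm:odd_latin}, and the remaining equivalence \emph{empty even graph} $\Leftrightarrow$ \emph{Latin} is obtained by composing the previous two. This composition is the only edge not quoted verbatim, and it is where I would be careful that the cited results all hold in the present finite regime; Theorem \ref{Thm:odd_latin} is stated for finite antilattices, matching the hypothesis of the theorem, so no gap arises.

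On the right, \emph{simple} $\Rightarrow$ \emph{irreducible} is the first assertion of Proposition \ref{Prp:simple_irred}. Down the central spine, \emph{Latin} $\Rightarrow$ \emph{semimagic} is Theorem \ref{Thm:latin_semimagic}, and \emph{semimagic} $\Rightarrow$ \emph{square} follows straight from the definitions: a semimagic antilattice has join matrix $J$ equal to an $n\times n$ semimagic square on the entries $1,\dots,n^2$, so its order is the perfect square $n^2$. Here I read the bottom node \emph{square} as ``of square order,'' and I would flag explicitly that it cannot mean ``square antilattice'' in the sense $p=q=r=s$: taking $M$ of type $(1,n^2)$ together with any $n\times n$ semimagic square $J$ produces a semimagic antilattice whose meet reduct is flat, hence not square as an antilattice.

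The main obstacle is therefore not a computation but a matter of interpretation and bookkeeping: pinning down the meaning of the bottom node so that \emph{semimagic} $\Rightarrow$ \emph{square} is simultaneously correct and sharp, and checking that each quoted proposition and theorem is genuinely available under the finite hypothesis. Once ``square'' is fixed as square order, every edge collapses to a citation or a one-line composition of equivalences, and the diagram holds as stated.
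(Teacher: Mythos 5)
Your proposal is correct and, for every edge of the diagram, follows essentially the same route as the paper: the arrows are discharged by citing Proposition \ref{Prp:elem_odd_simple}, Proposition \ref{Prp:odd_empty}, Theorem \ref{Thm:odd_latin}, Proposition \ref{Prp:simple_irred} and Theorem \ref{Thm:latin_semimagic}, the two arrows out of ``odd and simple'' are definitional, and the third equivalence in the odd/Latin/empty-even-graph triangle is obtained by composing the other two, exactly as the paper does.

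The one place you part company with the paper is the bottom edge. The paper reads the node ``square'' as its defined notion of square antilattice ($p=q=r=s$) and disposes of the edge with ``just by definition'': implicitly, a semimagic antilattice of order $n^2$ is taken to have meet matrix equal to the $n\times n$ normal form, so all four type parameters equal $n$. You instead read the definition of semimagic permissively, allowing $M$ to be in normal form of any shape (e.g.\ type $(1,n^2)$), which makes your flat counterexample legitimate and forces you to demote ``square'' to ``square order.'' Both readings render the diagram true, so this is a definitional ambiguity in the paper rather than a gap in your argument; but the authors' intent --- visible in the D\"{u}rer and Lo-Shu examples, in the treatment of Latin antilattices (where $M$ is tacitly $n\times n$), and in the ``by definition'' claim --- is the square-$M$ reading, under which your counterexample is simply not a semimagic antilattice and the stronger implication stands. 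Separately, be aware that the paper's proof of this theorem does more than verify the arrows: it also establishes that the implications are strict, the substantive part being the existence (via the Heinrich--Zhu theorem on aligned orthogonal subsquares, Theorem \ref{Thm:odd}) of odd antilattices that are not simple and of simple odd antilattices that are not elementary. Your proposal omits all of this, which is acceptable for the statement as literally quoted, but it is where the paper locates the real work of this section.
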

We start at the bottom of the diagram. That every semimagic antilattice is square is just by definition. To see the implication is strict, just take any antilattice $N$ with meet matrix $M$ in normal form and $J$ not a semimagic square. No relabeling of $N$ can also have $M$ in normal form and $J$ semimagic.

That every Latin antilattice is semimagic is Theorem \ref{Thm:latin_semimagic}. That the implication is strict is Proposition \ref{Prp:semi_not_latin}.

The equivalence of finite odd antilattices, Latin antilattices, and antilattices with empty even graph are Theorem \ref{Thm:odd_latin} and
Proposition \ref{Prp:odd_empty}.

The strict implication from simple antilattices to irreducible antilattices is covered in Proposition \ref{Prp:simple_irred}.

There is no implication from simple antilattices to odd antilattices because of the existence of nonsquare simple antilattices discussed before.

That every elementary antilattice is odd and simple is Proposition \ref{Prp:elem_odd_simple}.

What remains is to show that there exists an odd antilattice which is not simple, and there exists an odd, simple antilattice which is not elementary. For this we use the powerful results of Heinrich and Zhu \cite{HeZh86} about the existence of orthogonal subsquares in orthogonal Latin squares. Let $LS(v,n)$ denote a pair of orthogonal latin squares of order $v$ such that some $n$ rows and columns define in each square a pair of orthogonal latin squares of order $n$. It is shown that for $v > n > 1$, there exists a $LS(v,n)$ if and only if $v\geq 3n$, $v\neq 6$, $n\neq 2,6$. In the language of antilattices, the existence of orthogonal subsquares is equivalent to the existence of nontrivial subantilattices of an odd antilattice. The main result was obtained by work of several authors in a series of papers and can be stated in terms of antilattices as follows:

\begin{theorem}\label{Thm:odd}
An odd antilattice of order $v^2$ with a subantilattice of order $n^2$ exists if and only if $v\geq 3n$, $v\neq 6$, $n\neq 2,6$.
\end{theorem}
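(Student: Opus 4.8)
The plan is to translate the statement completely into the language of orthogonal Latin squares and then invoke the quoted Heinrich--Zhu existence result for $LS(v,n)$; the antilattice content of the theorem is entirely carried by a dictionary between subantilattices and orthogonal subsquares. By Theorem~\ref{Thm:odd_latin}, a finite odd antilattice $N$ of order $v^2$ is precisely a Latin antilattice, so it is determined by a meet matrix $M$ in normal form together with a join matrix $J$ whose component squares $J_1$ and $J_2$ form an orthogonal pair of Latin squares of order $v$. Relabelling each element of $N$ by its position $(i,j)$ in $M$, the element occupying cell $(a,b)$ of $J$ is the ordered pair $(J_1(a,b),J_2(a,b))$. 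Throughout one assumes $1 < n < v$, corresponding to a proper nontrivial subantilattice, which matches the range $v>n>1$ of the cited criterion.

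The key step is the dictionary itself. A subantilattice $S$ is a subset closed under both $\land$ and $\lor$, hence it is simultaneously a sub-rectangular band of the meet reduct and of the join reduct. A sub-rectangular band is always a full combinatorial rectangle of the ambient generating matrix: if $x,y\in S$ then the entry at (row of $x$, column of $y$) equals $xy\in S$, so $S$ contains every cell in the intersection of the rows and columns it meets. Since $S$ is itself odd (Proposition~\ref{Prp:odd_quasi}) and therefore square (Proposition~\ref{Prp:odd_square}), it is an $n\times n$ submatrix in \emph{both} $M$ and $J$. Being an $n\times n$ submatrix of $M$ means $S=\{(i,j) : i\in R,\ j\in C\}$ for some $R,C\subseteq\{1,\dots,v\}$ with $|R|=|C|=n$, while being an $n\times n$ submatrix of $J$ means the cells $(a,b)$ with $J_1(a,b)\in R$ and $J_2(a,b)\in C$ lie in exactly $n$ rows $P$ and $n$ columns $Q$. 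Restricted to $P\times Q$, the square $J_1$ then takes precisely the values $R$ in a Latin pattern, $J_2$ takes the values $C$, and orthogonality of the pair restricts to $P\times Q$; this is exactly the statement that $n$ rows and columns of $(J_1,J_2)$ determine an orthogonal pair of Latin squares of order $n$, i.e. an $LS(v,n)$. Conversely, every $LS(v,n)$ produces such an $S$, so subantilattices of order $n^2$ correspond bijectively to orthogonal Latin subsquares of order $n$.

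With the dictionary established, the theorem follows at once from the existence criterion quoted above: for $v>n>1$ an $LS(v,n)$ exists if and only if $v\geq 3n$, $v\neq 6$, and $n\neq 2,6$. I expect the main obstacle to lie in the second paragraph, namely in pinning down the alignment between ``rows and columns of $M$'' (which fix the symbol sets $R$ and $C$) and ``rows and columns of $J$'' (which fix the subarray $P\times Q$), and in verifying that closure under both operations forces $S$ to be a genuine filled $n\times n$ submatrix of $J$ rather than a subset scattered across $n$ rows and $n$ columns with empty cells. Once this is confirmed, so that the restricted subarrays really are Latin squares of order $n$ on the symbol sets $R$ and $C$, the equivalence with $LS(v,n)$ is exact and the cited combinatorial result closes the argument.
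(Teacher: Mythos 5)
Your proposal is correct and takes essentially the same route as the paper: the paper itself merely asserts that, in the language of antilattices, aligned orthogonal Latin subsquares correspond to nontrivial subantilattices of an odd (equivalently Latin) antilattice, and then invokes the Heinrich--Zhu existence criterion for $LS(v,n)$ with $v>n>1$. Your second paragraph (the full-rectangle property of sub-rectangular bands, squareness of odd subantilattices via the quasivariety argument, and the restriction of Latinness and orthogonality to the aligned $n\times n$ block) supplies precisely the dictionary that the paper leaves implicit, so your write-up is a correct, fleshed-out version of the paper's argument.
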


\begin{corollary}
Every odd antilattice of order less than $81$ is elementary.
\end{corollary}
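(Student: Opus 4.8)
The plan is to argue by contrapositive: I would show that any finite odd antilattice possessing a proper nontrivial subantilattice must have order at least $81$, which immediately yields that every odd antilattice of smaller order is elementary.

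First I would pin down the shape of any proper nontrivial subantilattice $M$ of a finite odd antilattice $N$ of order $v^2$. Since oddness is inherited by subantilattices (by Proposition \ref{Prp:odd_quasi} the odd antilattices form a quasivariety and are thus closed under subalgebras), $M$ is itself a finite odd antilattice, so by Proposition \ref{Prp:odd_square} it is square; hence $|M| = n^2$ for some integer $n$. Nontriviality forces $|M| > 1$, so $n > 1$, and properness forces $|M| < |N|$, so $n < v$. Therefore $v > n > 1$, which is exactly the regime governed by Theorem \ref{Thm:odd}.

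Next I would invoke the ``only if'' direction of Theorem \ref{Thm:odd}: the existence of an odd antilattice of order $v^2$ containing a subantilattice of order $n^2$ forces $v \geq 3n$, $v \neq 6$, and $n \neq 2, 6$. Combining $n > 1$ with $n \neq 2$ gives $n \geq 3$, whence $v \geq 3n \geq 9$ and so $v^2 \geq 81$. Contrapositively, an odd antilattice of order $v^2 < 81$ admits no proper nontrivial subantilattice, and is therefore elementary by definition.

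I do not expect a genuine obstacle here, since the entire quantitative content is supplied by Theorem \ref{Thm:odd} and the rest is the short numerical deduction above. The one point deserving explicit care is confirming that a subantilattice of an odd antilattice is again odd and square, so that its order really is a perfect square $n^2$ and the hypotheses $v > n > 1$ of Theorem \ref{Thm:odd} genuinely apply; for this reason I would state the inheritance of oddness (from Proposition \ref{Prp:odd_quasi}) and of squareness (from Proposition \ref{Prp:odd_square}) before applying the main theorem.
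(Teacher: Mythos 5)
Your proof is correct and takes essentially the same route as the paper: the corollary is stated there as an immediate consequence of Theorem \ref{Thm:odd}, and your contrapositive deduction (oddness and squareness pass to subantilattices via Propositions \ref{Prp:odd_quasi} and \ref{Prp:odd_square}, so a proper nontrivial subantilattice has order $n^2$ with $v>n>1$, whence $n\geq 3$ and $v\geq 3n\geq 9$, giving order at least $81$) is precisely the intended argument. The only nuance worth noting is that the same squareness proposition is also what lets you write the ambient antilattice's order as $v^2$ in the first place, which you use implicitly.
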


An example of the smallest case of Theorem \ref{Thm:odd}, $n=3$ and $v=9$, gives us the following.

\begin{theorem}\label{Thm:odd_not_simple}
There exist odd antilattices that are not simple.
\end{theorem}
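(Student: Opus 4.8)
The plan is to exhibit the required antilattice as a direct square, which makes non-simplicity automatic. Concretely, let $A$ be any odd antilattice of order $9$; such an $A$ exists by Proposition~\ref{Prp:Euler} (since $9 = 3^2$ with $3 > 2$) together with Theorem~\ref{Thm:odd_latin}, the Lo-Shu antilattice being one explicit instance. Set $N = A \times A$, an antilattice of order $81 = 9^2$. This $N$ realizes the smallest case $n = 3$, $v = 9$ of Theorem~\ref{Thm:odd}: for any fixed $a \in A$, the subset $A \times \{a\}$ is a subantilattice of order $9 = 3^2$, and the parameters satisfy $v \geq 3n$, $v \neq 6$, $n \neq 2,6$, so $N$ is in particular not elementary.

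The first step is to verify that $N$ is odd. Here I would invoke Proposition~\ref{Prp:odd_quasi}, which says that the odd antilattices form a quasivariety; since $A$ is odd and quasivarieties are closed under direct products (Mal'cev's Theorem, recalled in Section~\ref{Ssec:ua}), the product $N = A \times A$ is again odd. If a more concrete verification is preferred, one observes that the join generating matrix of $N$ is the Kronecker product $J_A \otimes J_A$, and that the classical (MacNeish) product construction carries a pair of orthogonal Latin squares of order $3$ to a pair of orthogonal Latin squares of order $9$; hence $N$ is Latin and therefore odd by Theorem~\ref{Thm:odd_latin}.

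The second step is to verify that $N$ is not simple. Since $A$ is nontrivial, $N = A \times A$ is a direct product of two nontrivial antilattices, hence reducible. By the argument used for Lemma~\ref{Lem:simple_irr} and reprised in Proposition~\ref{Prp:simple_irred}, the kernel of the projection $N \to A$ onto the first factor is a congruence whose classes are the sets $\{a\} \times A$: this gives $9$ congruence classes each of size $9$, so the congruence is neither $\nabla$ nor $\Delta$. Thus $N$ is odd but not simple, which is exactly the assertion; in particular the downward implication from ``odd'' to ``simple'' in the diagram of Theorem~\ref{Thm:main} indeed fails.

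The only genuinely delicate point is the oddness of the product in the first step, and this is where I expect the main obstacle to lie if one avoids abstraction: oddness is a priori a quasi-equational condition about the absence of two-element subantilattices in $N$, and checking it by hand would mean ruling out all such pairs in an order-$81$ structure. The quasivariety closure disposes of it in one line, but it is worth keeping the Kronecker-product description in reserve, since it makes the oddness transparent and simultaneously pins down the type of $N$. This also confirms that the example lives at order $81$ and not below, consistent with the earlier observation that every odd antilattice of order less than $81$ is elementary and with Corollary~\ref{Cor:prime_simple}, which excludes non-simple odd antilattices at prime-square orders such as $9$, $25$, and $49$.
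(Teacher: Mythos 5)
Your proposal is correct and takes essentially the same approach as the paper: the paper's proof likewise constructs the example as a direct product of two elementary (hence odd) antilattices of order $9$, yielding an odd antilattice of order $81$ that is neither elementary nor simple. The only difference is presentational --- the paper exhibits the explicit $9\times 9$ join matrix of the product, whereas you justify oddness abstractly via quasivariety closure under direct products (with the Kronecker/MacNeish construction as backup) and non-simplicity via the projection kernel, details the paper leaves implicit.
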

\begin{proof}
We present the generating matrices of an antilattice of order $81$, which is obtained as a product of two elementary antilattices of order $9$. Thus it is odd but neither elementary nor simple. The matrix $9\times 9$ matrix $M$ is in normal form on the entries $\{0,\ldots,80\}$. Here is the matrix
\[
J =
\begin{array}{|rrrrrrrrr|}
\hline       	
0  &  20  &  10  &  60  &  80  &  70  &  30  &  50  &  40    \\
19  &  9  &  2  &  79  &  69  &  62  &  49  &  39  &  32    \\
11  &  1  &  18  &  71  &  61  &  78  &  41  &  31  &  48    \\
57  &  77  &  67  &  27  &  47  &  37  &  6  &  26  &  16    \\
76  &  66  &  59  &  46  &  36  &  29  &  25  &  15  &  8    \\
68  &  58  &  75  &  38  &  28  &  45  &  17  &  7  &  24    \\
33  &  53  &  43  &  3  &  23  &  13  &  54  &  74  &  64    \\
52  &  42  &  35  &  22  &  12  &  5  &  73  &  63  &  56    \\
44  &  34  &  51  &  14  &  4  &  21  &  65  &  55  &  72    \\
\hline
\end{array}
\]
\end{proof}

The following is a corollary of Theorem \ref{Thm:odd} and can be viewed as refining Corollary \ref{Cor:prime_simple}:

\begin{corollary}\label{Cor:prime_simple_nonE}
Let $p$ be a prime. A nonelementary, simple odd antilattice of order $p^2$ exists if and only if $p \geq 11$.
\end{corollary}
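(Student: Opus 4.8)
The plan is to derive the statement almost entirely from Theorem~\ref{Thm:odd}, using Corollary~\ref{Cor:prime_simple} to eliminate the hypothesis of simplicity and Proposition~\ref{Prp:odd_no_prime} to control the possible orders of subantilattices. First I would observe that the word ``simple'' is redundant here: by Corollary~\ref{Cor:prime_simple}, every odd antilattice of order $p^2$ is automatically simple. Hence it suffices to prove that a nonelementary odd antilattice of order $p^2$ exists if and only if $p\geq 11$.

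Next I would translate nonelementarity into the hypotheses of Theorem~\ref{Thm:odd}. An odd antilattice $N$ of order $p^2$ is nonelementary exactly when it has a proper nontrivial subantilattice. By Proposition~\ref{Prp:odd_no_prime}, every subantilattice of an odd antilattice has square order, so any such subantilattice has order $n^2$ with $1<n<p$. Applying Theorem~\ref{Thm:odd} with $v=p$, a nonelementary odd antilattice of order $p^2$ exists if and only if there is an integer $n$ with $1<n<p$, $n\neq 2,6$, and $p\geq 3n$; the side condition $v\neq 6$ is automatic since $p$ is prime.

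Finally I would carry out the arithmetic. Among the admissible values of $n$ (those with $n>1$ and $n\neq 2,6$) the smallest is $n=3$, which gives the weakest constraint $p\geq 9$; any larger $n$ only demands a larger $p$. For the forward implication, if $p\in\{2,3,5,7\}$ then every admissible $n$ satisfies $n\geq 3$, so $3n\geq 9>p$ and the inequality $p\geq 3n$ fails for all admissible $n$; thus no nonelementary odd antilattice of order $p^2$ exists. For the reverse implication, if $p\geq 11$ then $n=3$ satisfies $1<3<p$, $3\neq 2,6$, and $p\geq 11>9=3\cdot 3$, so Theorem~\ref{Thm:odd} yields a nonelementary, and by Corollary~\ref{Cor:prime_simple} simple, odd antilattice of order $p^2$.

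I do not expect a genuine obstacle: the real content is packaged in Theorem~\ref{Thm:odd}, and what remains is bookkeeping. The only point requiring care is the interplay between the bound $p\geq 3n$ at its minimal admissible value $n=3$ and the primality of $p$, which is exactly what promotes the naive threshold $p\geq 9$ to the stated $p\geq 11$, together with the verification that the small primes $2,3,5,7$ admit no admissible $n$ at all.
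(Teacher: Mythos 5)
Your proposal is correct and matches the paper's (largely implicit) argument: the paper also derives this corollary by combining Theorem~\ref{Thm:odd} (applied with $v=p$, $n=3$) with Corollary~\ref{Cor:prime_simple} to get simplicity for free. Your write-up merely makes explicit the bookkeeping the paper leaves to the reader, namely that subantilattices of odd antilattices have square order (Proposition~\ref{Prp:odd_no_prime}), that the excluded values $n=2,6$ force $n\geq 3$ and hence $p\geq 9$, and that primality promotes this to $p\geq 11$.
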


For example, we may take $v=11$ and $n=3$ to obtain a nonelementary, simple, odd antilattice of order $121$ containing a subantilattice of order $9$.

Here is a more explicit example, given by a construction from \cite{He77}. It implies the existence of a simple, odd antilattice of order $13^2$ that contains a subantilattice of order $4^2 = 16$. Below is a Latin square $L$ of order $13$ which is \emph{self-orthogonal}, meaning that $L$ and its transpose $L^T$ form a pair of orthogonal Latin squares. The Latin subsquare of order $4$ is depicted in red, and it is also self-orthogonal.
\[
L = \begin{array}{|rrrrrrrrrrrrr|}
\hline
1 & 10 & 11 & 12 & 13 & 8 & 4 & 9 & 5 & 6 & 2 & 7 & 3 \\
6 & 2 & 10 & 11 & 12 & 13 & 9 & 5 & 1 & 7 & 3 & 8 & 4 \\
2 & 7 & 3 & 10 & 11 & 12 & 13 & 1 & 6 & 8 & 4 & 9 & 5 \\
7 & 3 & 8 & 4 & 10 & 11 & 12 & 13 & 2 & 9 & 5 & 1 & 6 \\
3 & 8 & 4 & 9 & 5 & 10 & 11 & 12 & 13 & 1 & 6 & 2 & 7 \\
13 & 4 & 9 & 5 & 1 & 6 & 10 & 11 & 12 & 2 & 7 & 3 & 8 \\
12 & 13 & 5 & 1 & 6 & 2 & 7 & 10 & 11 & 3 & 8 & 4 & 9 \\
11 & 12 & 13 & 6 & 2 & 7 & 3 & 8 & 10 & 4 & 9 & 5 & 1 \\
10 & 11 & 12 & 13 & 7 & 3 & 8 & 4 & 9 & 5 & 1 & 6 & 2 \\
5 & 6 & 7 & 8 & 9 & 1 & 2 & 3 & 4 &\color{red}{10} &\color{red}{12} &\color{red}{13} &\color{red}{11} \\
9 & 1 & 2 & 3 & 4 & 5 & 6 & 7 & 8 &\color{red}{13} &\color{red}{11} &\color{red}{10} &\color{red}{12} \\
4 & 5 & 6 & 7 & 8 & 9 & 1 & 2 & 3 &\color{red}{11} &\color{red}{13} &\color{red}{12} &\color{red}{10} \\
8 & 9 & 1 & 2 & 3 & 4 & 5 & 6 & 7 &\color{red}{12} &\color{red}{10} &\color{red}{11} &\color{red}{13} \\
\hline
\end{array}
\]
The $13\times 13$ generating matrix $M$ of the corresponding antilattice is in normal form in the entries $\{0,\ldots,168\}$ with the generating matrix of the subantilattice being the $4\times 4$ block in the lower right corner:
\[
\begin{array}{|rrrr|}
\hline
\color{red}{126} &\color{red}{127} &\color{red}{128} &\color{red}{129} \\
\color{red}{139} &\color{red}{140} &\color{red}{141} &\color{red}{142} \\
\color{red}{152} &\color{red}{153} &\color{red}{154} &\color{red}{155} \\
\color{red}{165} &\color{red}{166} &\color{red}{167} &\color{red}{168} \\
\hline
\end{array}
\]
Here is the matrix $J$, again with the subantilattice's matrix depicted in red:
\[
J = \begin{array}{|rrrrrrrrrrrrr|}
\hline
0 & 122 & 131 & 149 & 158 & 103 & 50 & 114 & 61 & 69 & 21 & 81 & 33 \\
74 & 14 & 123 & 132 & 150 & 159 & 116 & 63 & 10 & 83 & 26 & 95 & 47 \\
23 & 87 & 28 & 124 & 133 & 151 & 160 & 12 & 76 & 97 & 40 & 109 & 52 \\
89 & 36 & 100 & 42 & 125 & 134 & 143 & 161 & 25 & 111 & 54 & 6 & 66 \\
38 & 102 & 49 & 113 & 56 & 117 & 135 & 144 & 162 & 8 & 68 & 20 & 80 \\
163 & 51 & 115 & 62 & 9 & 70 & 118 & 136 & 145 & 13 & 82 & 34 & 94 \\
146 & 164 & 64 & 11 & 75 & 22 & 84 & 119 & 137 & 27 & 96 & 39 & 108 \\
138 & 147 & 156 & 77 & 24 & 88 & 35 & 98 & 120 & 41 & 110 & 53 & 5 \\
121 & 130 & 148 & 157 & 90 & 37 & 101 & 48 & 112 & 55 & 7 & 67 & 19 \\
57 & 71 & 85 & 99 & 104 & 1 & 15 & 29 & 43 &\color{red}{126} &\color{red}{155} &\color{red}{166} &\color{red}{141} \\
105 & 2 & 16 & 30 & 44 & 58 & 72 & 86 & 91 &\color{red}{167} &\color{red}{140} &\color{red}{129} &\color{red}{152} \\
45 & 59 & 73 & 78 & 92 & 106 & 3 & 17 & 31 &\color{red}{142} &\color{red}{165} &\color{red}{154} &\color{red}{127} \\
93 & 107 & 4 & 18 & 32 & 46 & 60 & 65 & 79 &\color{red}{153} &\color{red}{128} &\color{red}{139} &\color{red}{168} \\
\hline
\end{array}
\]

We conclude with the following.

\begin{problem}
For which orders do finite elementary antilattices exist?
\end{problem}

It would be also be interesting to investigate more general odd noncommutative lattices, such as odd quasilattices.

\section*{Acknowledgements}
This work is supported in part by the Slovenian Research Agency (research program P1-0294 and research projects N1-0032, J1-9187, J1-1690, N1-0140, J1-2481), and in part by H2020 Teaming InnoRenew CoE.

\end{document}